\documentclass[12pt,a4paper]{article}

\marginparwidth 0pt \oddsidemargin -0.3 cm \evensidemargin -0.3 cm
\topmargin -1.4 cm \textheight 24.5 truecm \textwidth 16.5 truecm
\parskip 4pt
\usepackage{graphicx,tikz}
\usepackage{latexsym}
\usepackage{amsmath}
\usepackage{amssymb}
\usepackage{array}
\usepackage{arydshln}

\usepackage{cite}
\usepackage{stmaryrd}
\usepackage{enumerate}

\usepackage{hyperref}

\usepackage{color}
\usepackage{lineno}
\usepackage{graphicx}
\usepackage{ae}
\usepackage{amsmath}
\usepackage{amssymb}
\usepackage{latexsym}
\usepackage{url}
\usepackage{epsfig}
\usepackage{mathrsfs}
\usepackage{amsfonts}
\usepackage{amsthm}
\usepackage{bm}




\newcommand{\Cay}{\mathrm{Cay}}

\newcommand{\Aut}{\mathrm{Aut}}

\newtheorem{theorem}{Theorem}[section]

\newtheorem{lemma}[theorem]{Lemma}

\newtheorem{cor}[theorem]{Corollary}

\theoremstyle{definition}



\numberwithin{equation}{section} 
\allowdisplaybreaks    


\def\qed{\hfill$\Box$\vspace{12pt}}



\long\def\delete#1{}



\usepackage{xcolor}
\usepackage[normalem]{ulem}

\usepackage{stfloats}

\begin{document}
	\title {Enumeration of Cayley graphs over a nonabelian group of order $8p$}

	\author{Bei Ye$^{a,b}$,~Xiaogang Liu$^{a,b,c,}$\thanks{Supported by the National Natural Science Foundation of China (No. 12371358) and the Guangdong Basic and Applied
			Basic Research Foundation (No. 2023A1515010986).}~$^,$\thanks{ Corresponding author. Email addresses: ybei@mail.nwpu.edu.cn, xiaogliu@nwpu.edu.cn, wj66@mail.nwpu.edu.cn},~Jing Wang$^{a,b,c}$
		\\[2mm]
		{\small $^a$School of Mathematics and Statistics,}\\[-0.8ex]
		{\small Northwestern Polytechnical University, Xi'an, Shaanxi 710072, P.R.~China}\\
		{\small $^b$Research \& Development Institute of Northwestern Polytechnical University in Shenzhen,}\\[-0.8ex]
		{\small Shenzhen, Guandong 518063, P.R. China}\\
		{\small $^c$Xi'an-Budapest Joint Research Center for Combinatorics,}\\[-0.8ex]
		{\small Northwestern Polytechnical University, Xi'an, Shaanxi 710129, P.R. China}\\
	}
	\date{}
	
	\openup 0.5\jot
	\maketitle
	\begin{abstract}
Let $T_{8p} = \left\langle a,b\mid a^{2p}=b^8=e,a^p=b^4,b^{-1}ab=a^{-1} \right\rangle$ be a nonabelian group of order $8p$, where $p$ is an odd prime number.
In this paper, we give the formula to calculate the number of Cayley graphs over $T_{8p}$ up to isomorphism by using the P\'olya Enumeration Theorem. Moreover, we get the formula to calculate the number of connected Cayley graphs over $T_{8p}$ by deleting the disconnected graphs. By applying the results, we list the exact number of (connected) Cayley graphs for $3\leq p \leq 13$.
		\smallskip
		
		\emph{Keywords:} Cayley graph; Cayley isomorphism; P\'olya Enumeration Theorem
		
		\emph{Mathematics Subject Classification (2010):} 05C50

	\end{abstract}

	\section{Introduction}
Let $G$ be a finite group with the identity element $e$, and $\emptyset \neq S \subseteq G \backslash \{e\}$. The \emph{Cayley digraph} $ \Gamma = \Cay(G,S)$ is denoted to be the directed graph whose  vertex set is $G$ and two vertices $x, y \in G$ are adjacent if and only if $xy^{-1} \in S$. If $S =S^{-1}=\{s^{-1}\mid s\in S \}$ (called an \emph{inverse-closed subset}), then $\Cay(G,S)$ is an undirected  graph, which is called a \emph{Cayley graph}. In particular, if $G$ is a cyclic group, then $\Cay(G,S)$ is called a \emph{circulant (di)graph}. A Cayley digraph $\Cay(G,S)$ is called a \emph{DCI-graph} if for any Cayley digraph $\Cay(G,T)$, whenever $\Cay(G,T) \cong \Cay(G,S)$, there exists an automorphism $\sigma \in \Aut(G)$ such that $\sigma(S) = T$, where $\Aut(G)$ denotes the automorphism group of $G$. Similarly, if $\Cay(G,S)$ is a Cayley graph, and  for any Cayley graph $\Cay(G,T)$, whenever $\Cay(G,T) \cong \Cay(G,S)$, there exists an automorphism $\sigma \in \Aut(G)$ such that $\sigma(S) = T$, then $\Cay(G,S)$ is called a \emph{CI-graph}. A group $G$ is called a \emph{DCI-group} (respectively, \emph{CI-group}) if every Cayley digraph (respectively, Cayley graph) over $G$ is a DCI-graph (respectively, CI-graph).

In 1967, \'Ad\'am \cite{Adam1967} pioneered the study of DCI-graphs (DCI-groups) by proposing a conjecture: all circulant digraphs are DCI-graphs. Unfortunately, this conjecture was disproved by Elspas and Turner \cite{Elspas1970} in 1970. Despite its failure, this conjecture stimulated significant research in characterizing DCI-graphs (DCI-groups) and CI-graphs (CI-groups). For more information, please refer to \cite{Babai1977,Dobson1995,Dobson2015,Huang2000,Huang2003,CI-group2007,Muzychuk1995,Muzychuk1997}.
	
Determining the number of non-isomorphic Cayley digraphs (Cayley graphs)  is an interesting yet challenging problem, where DCI-groups (CI-groups) play an important role. In 1967, Turner \cite{Turner1967} found that P\'olya Enumeration Theorem is a suitable tool for counting  Cayley (di)graphs. Building on this, in 2000, Mishna \cite{Mishna2000} investigated the number of Cayley (di)graphs over cyclic groups, which was also studied in \cite{Alspach2002,Liskovets2000}. In 2017, Huang et al. \cite{HuangXY2017} considered the number of cubic Cayley graphs over dihedral groups. In 2019, Huang and Huang \cite{HuangXY2019} continued to study the number of Cayley (di)graphs over dihedral groups. In 2021, Wang, Liu and Feng \cite{WangLIU2021} studied the number of connected dicirculant graphs of order $4p$~($p$ prime). In 2024, Wang et al. \cite{WJing2024} calculated the number of (connected) dicirculant digraphs of order $4p$ ($p$ prime), and they got the number of (connected) dicirculant digraphs of out-degree $k$ for every $k$.
	
In this paper, we consider the enumeration of  Cayley graphs over the nonabelian group
	$$ T_{8p} = \left\langle a,b \mid a^{2p}=b^8=e,a^p=b^4,b^{-1}ab=a^{-1} \right\rangle, $$
where $p$ is an odd prime number, which comes from \cite{CI-group2007}. We give the formula to calculate the number of Cayley graphs over $T_{8p}$ by using the P\'olya Enumeration Theorem (See Theorem \ref{thm:graphnumber}). Moreover, we get the formula to calculate the number of connected Cayley graphs over $T_{8p}$ by deleting the disconnected graphs (See Theorem \ref{thm:connectgraphs}). Finally, we list the exact number of (connected) Cayley graphs for $3\leq p \leq 13$ (See Table \ref{table:graph}).

\section{Preliminaries}\label{Preliminaries}
	
	Let $G$ be a finite group with the identity element $e$ and let $X$ be a set. An \emph{action} of $G$ on $X$, denoted by $(G,X)$, is a map $\phi:G \times X \rightarrow X$ defined by $\phi(g,x) = gx$ such that
	\begin{itemize}
		\item [\rm $(1)$] $ex = x$, for all $x \in X$;
		\item [\rm $(2)$] $(g_{1}g_{2})x=g_{1}(g_{2}x)$, for all $x \in X$ and all $g_{1},g_{2} \in G$.
	\end{itemize}
	If there is an action of $G$ on $X$, then we say that $G$ \emph{acts on} $X$.  For each $x\in X$,
	the \emph{orbit} of $x$ is defined to be
	$$Gx=\{gx\mid g\in G \},$$
and the \emph{stabilizer} of $x$ is defined to be
	$$G_{x} = \{g\in G \mid gx = x \}.$$
	
	Let $H$ be a permutation group on $X~(|X|=n)$. Then $H$ acts on $X$ naturally by defining
$$
hx=h(x),~\forall~h\in H,~x\in X.
$$
It is known that every permutation $h\in H$ has a unique disjoint cycle decomposition. Let $b_{k}(h)$ be the number of the cycles of length $k$ in the cycle decomposition of a permutation $h$. The \emph{cycle type} of $h\in H$ is define as
	$$ \mathcal{T}(h) = (b_{1}(h),b_{2}(h),\dots,b_{n}(n)). $$
		The \emph{cycle index} $\mathcal{I}(H,X)$ of the action $(H,X)$ is defined as the following polynomial
		\begin{equation}\label{equ:cycleindex}
			\mathcal{I}(H,X) = P_{H}(x_{1},x_{2},\dots,x_{n}) = \frac{1}{|H|}\sum\limits_{h\in H}x_{1}^{b_{1}(h)}x_{2}^{b_{2}(h)}\cdots x_{n}^{b_{n}(h)}.
		\end{equation}
	
Let $D$ and $R$ be two finite sets, and denote by  $R^D$ the set of all mappings from $D$ to $R$. Let $H$ be a permutation group acting on $D$. Then we obtain a group action $(H,R^D)$ by
$$
\sigma f = f \circ \sigma^{-1}, ~~~~\forall~\sigma \in H,~ \forall~f \in R^D,
$$
where $f \circ \sigma^{-1}$ denotes the composite of two maps $f$ and $\sigma^{-1}$. Under the group action $(H,R^D)$, two mappings in $R^D$ are said to be \emph{$H$-equivalent} if they belong to the same orbit. The P\'olya Enumeration Theorem provides the number of orbits of the group action $(H,R^D)$.
	\begin{lemma}\emph{(P{\'o}lya Enumeration Theorem, see \cite[Chap. 2]{HararyP1973})} \label{Lem:Polya}
		Let $D$ and $R$ be finite sets with $|D|=n$ and $|R|=m$. Let $H$ be a permutation group acting on $D$. Denote by $\mathcal{F}$ the set of all orbits of the group action $(H, R^D)$. Then
		$$
		|\mathcal{F}|=P_H(m, m, \ldots, m),
		$$
		where $P_H(x_1, x_2, \ldots, x_n)$ is the cycle index of $(H,D)$ defined in \eqref{equ:cycleindex}.
	\end{lemma}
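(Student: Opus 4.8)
The plan is to deduce this counting form of the P\'olya Enumeration Theorem from the Cauchy--Frobenius (Burnside) counting lemma applied to the induced action $(H,R^D)$. Recall that Burnside's lemma gives
$$
|\mathcal{F}| = \frac{1}{|H|}\sum_{\sigma\in H}|\mathrm{Fix}(\sigma)|,
\qquad \mathrm{Fix}(\sigma)=\{f\in R^D : \sigma f = f\}.
$$
If one does not wish to quote this as a black box, it follows in one line by double counting the set of pairs $\{(\sigma,f): \sigma f=f\}$: summing over $\sigma$ first gives $\sum_{\sigma}|\mathrm{Fix}(\sigma)|$, while summing over $f$ first and applying the orbit--stabilizer theorem to each orbit gives $|H|\cdot|\mathcal{F}|$. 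Thus the entire statement reduces to evaluating $|\mathrm{Fix}(\sigma)|$ for each $\sigma\in H$.

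The heart of the argument is this fixed-point count. A map $f\in R^D$ satisfies $\sigma f = f$ precisely when $f\circ\sigma^{-1}=f$, that is, $f(d)=f(\sigma(d))$ for every $d\in D$. Iterating this identity shows that $f$ is forced to be constant along each orbit of the cyclic group $\langle\sigma\rangle$ acting on $D$, i.e.\ constant on every cycle of the disjoint cycle decomposition of $\sigma$; conversely, any $f$ that is constant on each cycle is clearly fixed. Hence a fixed map is specified by choosing, independently and freely, one value in $R$ for each cycle of $\sigma$, so that
$$
|\mathrm{Fix}(\sigma)| = m^{c(\sigma)},
$$
where $c(\sigma)$ denotes the total number of cycles of $\sigma$ (counting fixed points as cycles of length $1$).

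It then remains to reconcile the exponent $c(\sigma)$ with the cycle index. Since the cycle decomposition partitions $D$, the total number of cycles is $c(\sigma)=\sum_{k=1}^{n}b_k(\sigma)$. Substituting $x_1=\cdots=x_n=m$ into the definition of the cycle index in \eqref{equ:cycleindex} gives $m^{b_1(\sigma)}m^{b_2(\sigma)}\cdots m^{b_n(\sigma)} = m^{\sum_k b_k(\sigma)} = m^{c(\sigma)} = |\mathrm{Fix}(\sigma)|$ for each $\sigma$. Averaging both sides over $H$ and comparing with the Burnside formula yields $|\mathcal{F}| = P_H(m,m,\ldots,m)$, as claimed.

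I do not expect a genuine obstacle here, as no single step is deep; the delicate point is purely bookkeeping. One must verify carefully that the prescribed action $\sigma f = f\circ\sigma^{-1}$ yields the condition ``$f$ is constant on the cycles of $\sigma$'' and correctly count such $f$ as $m^{c(\sigma)}$. The appearance of $\sigma^{-1}$ in the action is harmless, since $\sigma$ and $\sigma^{-1}$ have the very same cycle partition of $D$, so the fixed-point count is unaffected.
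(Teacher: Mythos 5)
Your proof is correct and complete: the reduction to Burnside's lemma, the identification of $\mathrm{Fix}(\sigma)$ with maps constant on the cycles of $\sigma$, and the identity $m^{c(\sigma)}=m^{b_1(\sigma)}\cdots m^{b_n(\sigma)}$ together give exactly the stated formula. The paper itself offers no proof of this lemma --- it is quoted from Harary and Palmer --- so there is nothing to compare against; your argument is the standard derivation and can stand as a self-contained justification.
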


Let
$$
\left\{ a^{r},a^{r}b,a^{r}b^{2},a^{r}b^{3}\mid 0\leq r \leq 2p-1\right\}
$$
denote the set of all elements of the group $T_{8p}$. Then we obtain the following results by direct calculation.
	
	\begin{lemma}\label{Lem:computations}
		For the elements of $T_{8p}$, the following hold:
		\begin{itemize}
			\item[\rm (1)] $a^{r}b=ba^{-r},~a^{r}b^2=b^2a^{r}$;
			\item[\rm (2)] $(a^rb)^{-1}=a^{r+p}b^3,~ (a^r b^2)^{-1}=a^{p-r}b^2,~ (a^{r}b^{3})^{-1}=a^{r+p}b$.
		\end{itemize}
	\end{lemma}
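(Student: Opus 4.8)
The plan is to reduce both parts to a single commutation law between powers of $a$ and powers of $b$, supplemented by the substitution $b^{4}=a^{p}$ coming from the presentation. First I would observe that, since inversion is an involution, the defining relation $b^{-1}ab=a^{-1}$ forces conjugation by $b$ to invert $a$ from either side; that is, $bab^{-1}=a^{-1}$ as well, equivalently $ba=a^{-1}b$. A routine induction on the exponents then upgrades this to the master identity
$$
b^{k}a^{r}=a^{(-1)^{k}r}b^{k},\qquad k,r\in\mathbb{Z}.
$$
Part (1) is immediate from this: taking $k=1$ and replacing $r$ by $-r$ gives $ba^{-r}=a^{r}b$, i.e.\ $a^{r}b=ba^{-r}$, while $k=2$ gives $b^{2}a^{r}=a^{r}b^{2}$, so $b^{2}$ commutes with every power of $a$.

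For part (2) the extra ingredient is to turn negative powers of $b$ back into the standard form $a^{s}b^{t}$ with $0\le t\le 3$. Using $b^{8}=e$ and $b^{4}=a^{p}$, together with $a^{-p}=a^{p}$ (which follows from $a^{2p}=e$), I would record the three reductions $b^{-1}=b^{7}=a^{p}b^{3}$, $b^{-2}=b^{6}=a^{p}b^{2}$ and $b^{-3}=b^{5}=a^{p}b$. Each inverse is then computed by the rule $(xy)^{-1}=y^{-1}x^{-1}$ followed by one application of the master identity to slide the $a$-power through the $b$-power. For instance $(a^{r}b)^{-1}=b^{-1}a^{-r}=a^{p}b^{3}a^{-r}=a^{p}\cdot a^{r}b^{3}=a^{r+p}b^{3}$, and the remaining two cases follow the same template, using in the $b^{2}$-case that $b^{2}$ commutes with $a$.

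There is no genuine obstacle here: the statement is a bookkeeping lemma, and the argument is a short induction followed by three parallel calculations. The only points that require care are the direction of the conjugation (making sure one uses $bab^{-1}=a^{-1}$ rather than its inverse) and the sign tracking in the exponent $(-1)^{k}r$ when reducing each inverse, so that the stated exponents $r+p$, $p-r$, $r+p$ come out correctly modulo the relations $a^{2p}=e$ and $a^{p}=b^{4}$.
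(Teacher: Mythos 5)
Your proposal is correct: the master identity $b^{k}a^{r}=a^{(-1)^{k}r}b^{k}$, the reductions $b^{-1}=a^{p}b^{3}$, $b^{-2}=a^{p}b^{2}$, $b^{-3}=a^{p}b$, and the three inverse computations all check out against the presentation of $T_{8p}$. The paper itself gives no proof beyond the phrase ``by direct calculation,'' and your argument is exactly the direct calculation intended, so there is nothing further to compare.
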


	\begin{lemma}\emph{(See \cite[Theorem~1.3]{CI-group2007})}\label{Lem:CI}
		For any odd prime $p$, the group
		$$  G = \langle x,b \mid  x^p = 1, b^r = 1, b^{-1}xb = x^{-1} \rangle, ~~ \text{for~} r=4 \text{~or~} 8 $$
is a CI-group.
	\end{lemma}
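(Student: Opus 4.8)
The plan is to verify the defining property directly through Babai's criterion, which states that $G$ is a CI-group if and only if, for every inverse-closed $S \subseteq G \setminus \{e\}$, any two regular subgroups of $A := \Aut(\Cay(G,S))$ that are isomorphic to $G$ are conjugate in $A$. (Recall that a regular subgroup of $\mathrm{Sym}(G)$ isomorphic to $G$ is precisely a conjugate $\sigma^{-1} G_R \sigma$ of the right regular representation $G_R$.) Accordingly, I would fix $S$, put $A = \Aut(\Cay(G,S))$ and $R = G_R \le A$, take an arbitrary regular $R' \le A$ with $R' \cong G$, and aim to produce $\alpha \in A$ with $\alpha^{-1} R' \alpha = R$.

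First I would exploit the arithmetic of $|G| = 2^{k} p$, where $k = 2$ when $r = 4$ and $k = 3$ when $r = 8$, together with the fact that the Sylow $p$-subgroup $\langle x \rangle \cong \mathbb{Z}_p$ is normal in $G$. The normal subgroup $\langle x \rangle$ induces a system of $2^{k}$ blocks of size $p$ (the left cosets of $\langle x \rangle$), on which $G$ acts through the cyclic quotient $G/\langle x \rangle \cong \mathbb{Z}_r$. Writing $P \le R$ and $P' \le R'$ for the cyclic Sylow $p$-subgroups, I would use Sylow's theorem in $A$ to conjugate $R'$ so that $P$ and $P'$ lie in one Sylow $p$-subgroup of $A$, and then refine this---using that each of $P, P'$ is inverted by a $2$-element of its own regular copy---to arrange $P' = P$. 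Passing to $N := N_A(P)$ and the quotient $N / C_A(P) \hookrightarrow \Aut(\mathbb{Z}_p) \cong \mathbb{Z}_{p-1}$ then gives control over how both regular copies sit above the common subgroup $P$.

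The decisive step, and the one I expect to be hardest, is to conjugate the Sylow $2$-subgroups of $R$ and $R'$ into coincidence by an element of $C_A(P)$ while respecting the inverting relation $b^{-1} x b = x^{-1}$. Here the cyclicity of the Sylow $2$-subgroup $\langle b \rangle \cong \mathbb{Z}_r$ is essential: combined with the base facts that $\mathbb{Z}_p$ is a CI-group (prime circulants are CI by Turner \cite{Turner1967}) and that the relevant cyclic $2$-groups are CI, it should force the two block systems and the two cyclic $2$-parts to match up to conjugacy in $A$. The genuine obstacle is to rule out \emph{twisted} regular embeddings of $G$ that share $P$ yet are not $A$-conjugate to $R$; handling these requires a careful study of the centralizer $C_A(P)$ and of the $A$-action on the $2^{k}$ blocks. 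Should this direct analysis prove unwieldy, the statement may instead be deduced by checking that $G$ satisfies the sufficient conditions in the classification of CI-groups of order $4p$ and $8p$, which is the route ultimately taken in \cite{CI-group2007}.
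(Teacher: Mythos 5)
The paper does not prove this statement at all: it is imported verbatim as \cite[Theorem~1.3]{CI-group2007} (Li--Lu--P\'alfy), and the only ``proof'' in the paper is that citation. So the relevant comparison is between your sketch and the actual argument in that reference, and here your proposal has a genuine gap that you yourself flag. The reduction via Babai's criterion, the Sylow conjugation putting the two regular copies' Sylow $p$-subgroups $P$ and $P'$ into a common Sylow $p$-subgroup of $A=\Aut(\Cay(G,S))$, and the passage to $N_A(P)/C_A(P)\hookrightarrow \mathbb{Z}_{p-1}$ are all standard preprocessing; they are not where the theorem lives. The entire content of the result is the ``decisive step'' you defer: showing that inside $C_A(P)$ (equivalently, above the block system of $\langle x\rangle$-cosets) the two cyclic Sylow $2$-parts, together with the inverting action on $P$, can be conjugated into coincidence, and that no twisted regular embedding of $G$ sharing $P$ survives. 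Writing that this ``should force the two block systems and the two cyclic $2$-parts to match up to conjugacy'' asserts the conclusion rather than proving it; in Li--Lu--P\'alfy this step occupies the bulk of the argument and depends delicately on $r\in\{4,8\}$ (indeed the analogous statement fails for larger cyclic $2$-parts, so no soft argument of the kind you outline can succeed uniformly). Your closing sentence, proposing to fall back on ``checking the sufficient conditions in the classification'' of \cite{CI-group2007}, is circular as a proof of this lemma, since that classification \emph{is} the theorem being proved.

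Two smaller points. First, the step ``refine this \ldots{} to arrange $P'=P$'' is not automatic: a Sylow $p$-subgroup of $A$ may be much larger than $P$, and forcing the two semiregular subgroups of order $p$ to coincide (rather than merely to be conjugate into a common Sylow subgroup) itself requires an argument, typically via counting fixed points or via the minimal-normal-subgroup structure of $A$. Second, for the purposes of the present paper nothing of this kind is expected: the intended justification of the lemma is the citation, and the paper's own contribution begins only with Lemma~\ref{Lem:CIT8p}, where $T_{8p}$ is shown isomorphic to the $r=8$ group of the cited theorem.
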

	
	\begin{lemma}\label{Lem:CIT8p}
		For any odd prime $p$, the group
		$$ T_{8p} = \langle a,b\mid a^{2p}=b^8=1,a^p=b^4,b^{-1}ab=a^{-1} \rangle $$
		is a CI-group.
	\end{lemma}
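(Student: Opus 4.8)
The plan is to show that $T_{8p}$ is a CI-group by recognizing it as (isomorphic to) one of the groups already known to be CI by Lemma \ref{Lem:CI}. The target group in Lemma \ref{Lem:CI} with $r=8$ is
$$
G = \langle x, b \mid x^p = 1,\ b^8 = 1,\ b^{-1}xb = x^{-1} \rangle,
$$
which is the semidirect product $\mathbb{Z}_p \rtimes \mathbb{Z}_8$ of order $8p$. So the first and main step is to exhibit an explicit isomorphism $T_{8p} \cong G$. The natural candidate is to set $x = a^2$ inside $T_{8p}$: from the defining relation $a^{2p} = e$ we get $x^p = a^{2p} = e$, and conjugation gives $b^{-1}xb = b^{-1}a^2 b = (b^{-1}ab)^2 = a^{-2} = x^{-1}$, so $x$ and $b$ satisfy exactly the relations of $G$. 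I would then verify that $\langle x, b\rangle = \langle a^2, b\rangle$ is all of $T_{8p}$ and that the orders match, so that the surjection $G \to T_{8p}$ sending the generators of $G$ to $x=a^2$ and $b$ is actually an isomorphism.

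First I would pin down the structure of $T_{8p}$ to make the generator substitution rigorous. The relation $a^p = b^4$ together with $a^{2p}=e$ shows $b^4 = a^p$ has order $2$, consistent with $b^8 = e$, and it places $a^p$ in the center since $b^{-1}a^p b = a^{-p} = a^p$ (as $a^{2p}=e$) and $a^p$ is a power of $b^4$ which is central in $\langle b\rangle$. The key observation is that $a = a^{p+1}\cdot a^{-p}$ and, more usefully, $a^{p} = b^4$ lets me recover $a$ from $a^2$ and $b$: since $p$ is odd, $\gcd(2,p)=1$ need not directly give $a$, but writing $a = a^{p+1} a^{-p} = (a^2)^{(p+1)/2} b^{-4}$ expresses $a$ in terms of $x=a^2$ and $b$ (using that $p+1$ is even). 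This shows $\langle a^2, b \rangle \ni a$, hence $\langle x, b\rangle = T_{8p}$, so the substitution generates the whole group.

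With the generation settled, I would complete the isomorphism by a counting/order argument. The group $G$ defined in Lemma \ref{Lem:CI} has order $8p$ (it is $\langle x\rangle \rtimes \langle b\rangle$ with $|\langle x\rangle| = p$, $|\langle b\rangle| = 8$, and trivial intersection), and $|T_{8p}| = 8p$ by hypothesis. The assignment $x \mapsto a^2,\ b\mapsto b$ respects all defining relations of $G$, hence extends to a surjective homomorphism $\varphi: G \twoheadrightarrow T_{8p}$ (surjective by the previous paragraph). Since $|G| = |T_{8p}| = 8p$ is finite, $\varphi$ is an isomorphism. Then Lemma \ref{Lem:CI} (case $r=8$) applies verbatim to $G \cong T_{8p}$, and since the CI property is invariant under group isomorphism, $T_{8p}$ is a CI-group, which is the claim.

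The main obstacle I anticipate is the faithful translation of generators and relations, namely confirming that no relation of $T_{8p}$ is lost or contradicted under $x = a^2$ and that $a$ genuinely lies in $\langle a^2, b\rangle$ — the identity $a = (a^2)^{(p+1)/2} b^{-4}$ is the crux, and it must be checked carefully using $a^p = b^4$ and the parity of $p$. Once this single algebraic identity is verified, the rest is a clean order-matching argument together with the elementary fact that being a CI-group is an isomorphism invariant.
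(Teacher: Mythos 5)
Your proposal is correct and follows the same overall strategy as the paper: both reduce the claim to Lemma \ref{Lem:CI} by exhibiting an isomorphism $T_{8p}\cong G$ and then using the fact that the CI property is an isomorphism invariant. The difference lies entirely in how the isomorphism is established. The paper writes down an explicit element-level bijection $f(a^kb^l)=x^{k/2}b^l$ for $k$ even and $f(a^kb^l)=x^{(k-p)/2}b^{l+4}$ for $k$ odd, and then verifies surjectivity, injectivity, and the homomorphism property by a case analysis on parities; your map $x\mapsto a^2$, $b\mapsto b$ is exactly the inverse of that bijection, but you obtain it via von Dyck's theorem (the images satisfy the relations $x^p=1$, $b^8=1$, $b^{-1}xb=x^{-1}$, since $(a^2)^p=a^{2p}=e$ and $b^{-1}a^2b=a^{-2}$), surjectivity via the identity $a=(a^2)^{(p+1)/2}b^{-4}$ (which correctly uses $b^{-4}=a^{-p}$ and the evenness of $p+1$), and bijectivity from $|G|=|T_{8p}|=8p$. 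This is cleaner, as it avoids the paper's case-by-case verification that $f$ is multiplicative. The one point that deserves a sentence is the equality $|T_{8p}|=8p$: a surjection $G\twoheadrightarrow T_{8p}$ alone only gives $|T_{8p}|\le 8p$, so you must know the presentation of $T_{8p}$ does not collapse; the paper assumes this in the same way (via the normal form $a^kb^l$, $0\le k<2p$, $0\le l<4$), so you are on equal footing, but it is worth stating explicitly that you are using this normal form rather than deriving it.
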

\begin{proof}
By Lemma \ref{Lem:CI}, the group
	$$ G = \langle x,b \mid  x^p = 1, b^8 = 1, b^{-1}xb = x^{-1} \rangle$$
	is a CI-group, where $p$ is an odd prime. In the following, we show that $T_{8p}$ is isomorphic to $G$, and then $T_{8p}$ is a CI-group.
	
Recall that every element in $T_{8p}$ can be uniquely expressed as $a^kb^l$ with $0 \leq k < 2p$ and $0 \leq l < 4$. Then $\forall~a^kb^l, a^mb^n \in T_{8p}$, since $b^4 = a^p$, by Lemma \ref{Lem:computations}, we have
	\begin{eqnarray}\label{equ:CI1}
		(a^kb^l)(a^mb^n) = a^k(b^la^m)b^n =
		\begin{cases}
			a^{k+(-1)^lm}b^{l+n}, & \text{if } l+n < 4, \\
			a^{k+(-1)^lm+p}b^{l+n-4}, & \text{if } l+n \geq 4.
		\end{cases}
	\end{eqnarray}
Note that every element in $G$ can be uniquely expressed as $x^ib^j$ with $0 \leq i < p$ and $0 \leq j < 8$. Then $\forall~x^ib^j, x^mb^n \in G$, we have
	\begin{eqnarray}\label{equ:CI2}
	(x^ib^j)(x^mb^n) = x^i(b^jx^m)b^n = x^{i+(-1)^jm\pmod {p}}b^{j+n \pmod {8}}.
	\end{eqnarray}

Define a mapping $f: T_{8p} \to G$ such that
	\begin{equation*}
		f(a^kb^l) = \begin{cases}
			x^{\frac{k}{2}}b^l, & \text{if $k$ is even}, \\
			x^{\frac{k-p}{2}\pmod {p}}b^{l+4}, & \text{if $k$ is odd},
		\end{cases}
	\end{equation*}
for every $a^kb^l \in T_{8p}$.  So, $\forall ~x^ib^j \in G~(0\leq i < p,~0\leq j < 8)$, if $0\leq j < 4$, we choose $k = 2i~(0\leq k < 2p,\text{ $k$ is even})$ and $l=j~(0\leq l < 4)$, and then
$$
f(a^kb^l) = f(a^{2i}b^l)= x^ib^j;
$$
if $4\leq j < 8$, setting $j = l+4~(0\leq l < 4)$, we choose $k = 2i+p~(0\leq k <2p,\text{ $k$ is odd})$, and then
$$
f(a^{k}b^l)=f(a^{2i+p}b^l)=x^{i}b^{l+4}=x^ib^j.
$$
Thus $f$ is surjective.

Suppose that $f(a^kb^l) = f(a^mb^n)$, where $a^kb^l,a^mb^n \in T_{8p}$.
If $k$ and $m$ are both even, then
$$
f(a^kb^l)= x^{\frac{k}{2}}b^l = f(a^mb^n) = x^{\frac{m}{2}}b^n,
$$
which implies that $k=m$ and $l=n$, that is $a^kb^l=a^mb^n $. If $k$ and $m$ are both odd, then
$$
f(a^kb^l)= x^{\frac{k-p}{2}}b^{l+4} = f(a^mb^n) = x^{\frac{m-p}{2}}b^{n+4},
$$
which also implies that $a^kb^l = a^mb^n$. If $k$ is even and $m$ is odd, we have
$$ f(a^kb^l)= x^{\frac{k}{2}}b^l = f(a^mb^n) = x^{\frac{m-p}{2}}b^{n+4},$$
where $l<4$ and $n+4 \geq 4$, and this is impossible. Similarly, it is impossible if $k$ is odd and $m$ is even. Thus $f$ is injective.

Additionally, $\forall~a^kb^l,a^mb^n \in T_{8p}$, by \eqref{equ:CI1} and \eqref{equ:CI2}, one can easily verify that
$$ f(a^kb^la^mb^n) = f(a^kb^l)f(a^mb^n).$$
Thus $f$ is an isomorphism and $T_{8p} \cong G$.

Therefore, by Lemma \ref{Lem:CI}, $T_{8p}$ is a CI-group for any odd prime $p$.
\qed\end{proof}

Lemma \ref{Lem:CIT8p} implies the following result immediately.

	\begin{cor}\label{cor:S=T}
Let $p$ be an odd prime. Then two Cayley graphs $\Cay(T_{8p}, S)$ and $\Cay(T_{8p}, T)$ are isomorphic if and only if there exists $\sigma \in \Aut(T_{8p})$ such that $\sigma(S)=T$.
	\end{cor}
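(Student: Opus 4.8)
The plan is to observe that this corollary is essentially an unpacking of the definition of a CI-group together with one elementary fact about Cayley graphs, so the two directions of the biconditional are handled separately and asymmetrically.

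For the ``if'' direction, I would establish the general and routine fact that any group automorphism carrying $S$ onto $T$ induces a graph isomorphism between the corresponding Cayley graphs. Concretely, given $\sigma \in \Aut(T_{8p})$ with $\sigma(S) = T$, I view $\sigma$ as a bijection of the vertex set $T_{8p}$ and check that it preserves adjacency: for vertices $x, y$, we have that $x$ is adjacent to $y$ in $\Cay(T_{8p}, S)$ if and only if $xy^{-1} \in S$, and since $\sigma$ is a homomorphism this is equivalent to $\sigma(x)\sigma(y)^{-1} = \sigma(xy^{-1}) \in \sigma(S) = T$, i.e.\ to $\sigma(x)$ being adjacent to $\sigma(y)$ in $\Cay(T_{8p}, T)$. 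Hence $\sigma$ is the required isomorphism. This direction uses nothing beyond the definition of a Cayley graph and does not invoke the CI-property at all.

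For the ``only if'' direction, I would appeal to Lemma \ref{Lem:CIT8p}, which asserts that $T_{8p}$ is a CI-group. By definition this means that every Cayley graph over $T_{8p}$ is a CI-graph. Applying this to $\Cay(T_{8p}, S)$ and reading off the definition of a CI-graph: whenever a Cayley graph $\Cay(T_{8p}, T)$ is isomorphic to $\Cay(T_{8p}, S)$, the CI-property supplies some $\sigma \in \Aut(T_{8p})$ with $\sigma(S) = T$, which is exactly the desired conclusion.

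Since the result follows so directly from the definitions and from Lemma \ref{Lem:CIT8p}, there is no genuine obstacle here. The only point requiring care is the bookkeeping with the definitions fixed in the introduction, so that the abstract chain ``CI-group $\Rightarrow$ every Cayley graph is a CI-graph $\Rightarrow$ existence of the automorphism $\sigma$'' is spelled out correctly, and so that the orientation $\sigma(S)=T$ (rather than $\sigma(T)=S$) matches the convention adopted in the definition of a CI-graph.
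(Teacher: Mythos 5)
Your proposal is correct and matches the paper's treatment: the paper derives this corollary "immediately" from Lemma \ref{Lem:CIT8p} (that $T_{8p}$ is a CI-group), which is exactly your "only if" direction, while your "if" direction is the standard observation that an automorphism carrying $S$ to $T$ induces a graph isomorphism. You simply spell out explicitly what the paper leaves implicit.
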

	\begin{lemma}\emph{(See \cite[Theorem 42]{Shanks1993})}\label{Lem:Z2Pgroup}
		Let $p$ be a prime and $\mathbb{Z}_{2p}^*$ be the multiplicative group of congruence classes modulo $2p$. Then $\mathbb{Z}_{2p}^*$ is cyclic.
	\end{lemma}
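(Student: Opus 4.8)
The plan is to reduce the cyclicity of $\mathbb{Z}_{2p}^*$ to that of $\mathbb{Z}_p^*$ via the Chinese Remainder Theorem, and then to establish the latter by the classical finite-field argument. First I would dispose of the degenerate case $p=2$: here $2p=4$ and $\mathbb{Z}_4^*=\{1,3\}$ is cyclic of order $2$. For an odd prime $p$ we have $\gcd(2,p)=1$, so the Chinese Remainder Theorem yields a ring isomorphism $\mathbb{Z}_{2p}\cong\mathbb{Z}_2\times\mathbb{Z}_p$. Passing to unit groups gives a group isomorphism $\mathbb{Z}_{2p}^*\cong\mathbb{Z}_2^*\times\mathbb{Z}_p^*$, and since $\mathbb{Z}_2^*$ is the trivial group this shows $\mathbb{Z}_{2p}^*\cong\mathbb{Z}_p^*$. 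Thus it suffices to prove that $\mathbb{Z}_p^*$ is cyclic.

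For the remaining step I would use the fact that $\mathbb{Z}_p$ is a field, so over it any polynomial of degree $d$ has at most $d$ roots. Let $m$ denote the exponent of the finite abelian group $\mathbb{Z}_p^*$, that is, the least common multiple of the orders of its elements; a standard argument shows that in a finite abelian group the exponent is actually attained by some element $g$, so that $g$ has order $m$. By definition of the exponent every $x\in\mathbb{Z}_p^*$ satisfies $x^m=1$, so all $p-1$ elements of $\mathbb{Z}_p^*$ are roots of the polynomial $X^m-1$ in $\mathbb{Z}_p$; the root bound then forces $p-1\leq m$. On the other hand the order of any element divides $|\mathbb{Z}_p^*|=p-1$, whence $m\mid p-1$ and so $m\leq p-1$. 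Combining the two inequalities gives $m=p-1$, so $g$ generates $\mathbb{Z}_p^*$ and the group is cyclic.

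The only genuinely nontrivial ingredient is the existence of an element whose order equals the exponent of the abelian group $\mathbb{Z}_p^*$; I would establish this by the usual composition argument, namely that if $a$ and $b$ have orders $r$ and $s$ then one can build an element of order $\mathrm{lcm}(r,s)$ from suitable powers of $a$ and $b$, and then iterate over all elements. Everything else---the Chinese Remainder Theorem, the triviality of $\mathbb{Z}_2^*$, and the root bound for polynomials over a field---is routine. This is precisely the content of \cite[Theorem 42]{Shanks1993}, to which the reader may also be referred directly.
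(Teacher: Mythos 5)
Your proof is correct. The paper itself offers no argument for this lemma---it simply cites \cite[Theorem 42]{Shanks1993}---so there is no internal proof to compare against; what you have written is the standard, complete justification. The reduction via the Chinese Remainder Theorem, $\mathbb{Z}_{2p}^*\cong\mathbb{Z}_2^*\times\mathbb{Z}_p^*\cong\mathbb{Z}_p^*$ for odd $p$ (with the case $p=2$ checked by hand), is exactly right, and the exponent-plus-root-bound argument for the cyclicity of $\mathbb{Z}_p^*$ is the classical one. You correctly identify the only step needing care, namely that a finite abelian group contains an element whose order equals the group exponent, and the $\mathrm{lcm}$ construction you sketch does establish it. Nothing is missing.
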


	By Lemma \ref{Lem:Z2Pgroup}, we see that $\mathbb{Z}_{2p}^{*} = \{ 1,3,\dots , p-2, p+2, \dots , 2p-1  \}$ is a cyclic group of order $\varphi(2p) = p-1 $, where $\varphi$ is the Euler's totient function. Consequently, we obtain the following result.

	\begin{lemma}\label{Lem:Z2Pcyclic}
		Assume that $\mathbb{Z}_{2p}^*=\langle z\rangle$ for some integer $z \in \mathbb{Z}_{2p}^*$. Then, for any $\alpha \in \mathbb{Z}_{2p}^*$, there exists $i_\alpha \in \mathbb{Z}_{p-1}$ such that $\alpha=z^{i_\alpha}$. Furthermore, if $\alpha$ ranges over all elements of $\mathbb{Z}_{2p}^*$, then $i_\alpha$ ranges over all elements of $\mathbb{Z}_{p-1}$.
	\end{lemma}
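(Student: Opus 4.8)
The plan is to treat the statement as a direct consequence of the structure of the cyclic group $\mathbb{Z}_{2p}^*$. By Lemma \ref{Lem:Z2Pgroup} together with the paragraph preceding this lemma, $\mathbb{Z}_{2p}^*$ is a cyclic group of order $\varphi(2p)=p-1$, and we are handed a generator $z$ with $\mathbb{Z}_{2p}^*=\langle z\rangle$. The key fact I would record at the outset is that the order of $z$ equals $|\mathbb{Z}_{2p}^*|=p-1$; equivalently, the powers $z^0,z^1,\dots,z^{p-2}$ are pairwise distinct and exhaust $\mathbb{Z}_{2p}^*$, and $z^{j}=z^{j'}$ holds if and only if $j\equiv j'\pmod{p-1}$.

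For the first assertion, I would fix an arbitrary $\alpha\in\mathbb{Z}_{2p}^*$. Since $\alpha\in\langle z\rangle$, there is some integer $j$ with $\alpha=z^j$. Reducing the exponent modulo $p-1$, that is, choosing $i_\alpha$ with $i_\alpha\equiv j\pmod{p-1}$ and $0\le i_\alpha\le p-2$, produces a well-defined element $i_\alpha\in\mathbb{Z}_{p-1}$ satisfying $\alpha=z^{i_\alpha}$. Well-definedness here uses precisely the order statement above, so that the residue class of the exponent does not depend on the chosen representative $j$.

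For the second assertion, I would introduce the exponentiation map $\psi:\mathbb{Z}_{p-1}\to\mathbb{Z}_{2p}^*$ given by $\psi(i)=z^i$. The assignment $\alpha\mapsto i_\alpha$ built above is exactly an inverse of $\psi$, so it suffices to observe that $\psi$ is a bijection. Injectivity of $\psi$ is the order statement recorded at the start, and since $|\mathbb{Z}_{p-1}|=|\mathbb{Z}_{2p}^*|=p-1$, injectivity forces surjectivity. Consequently, as $\alpha$ runs through $\mathbb{Z}_{2p}^*$, the exponent $i_\alpha=\psi^{-1}(\alpha)$ runs through all of $\mathbb{Z}_{p-1}$, which is the claim.

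There is no substantive obstacle here: the entire content is the elementary fact that in a cyclic group of order $n$ generated by $z$, the exponent map $\mathbb{Z}_{n}\to\langle z\rangle$ is a bijection. The only point requiring care in the write-up is bookkeeping—taking $i_\alpha$ modulo $p-1$ so that it is a genuine element of $\mathbb{Z}_{p-1}$ rather than an unrestricted integer, and making explicit that the ``ranges over'' clause is nothing more than the bijectivity (equivalently, the surjectivity) of this map.
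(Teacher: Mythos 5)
Your proof is correct and follows exactly the reasoning the paper implicitly relies on: the paper states this lemma as an immediate consequence of Lemma \ref{Lem:Z2Pgroup} and the fact that $|\mathbb{Z}_{2p}^*|=\varphi(2p)=p-1$, without writing out a proof. Your write-up simply makes explicit the standard bijectivity of the exponent map $i\mapsto z^i$ from $\mathbb{Z}_{p-1}$ onto the cyclic group, which is the intended argument.
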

	\begin{lemma}\label{Lem:Aut}
Let $p$ be an odd prime. Then the automorphism group of $T_{8p}$ is
		$$ \Aut(T_{8p}) = \left\{\sigma_{\alpha,\beta},\tau_{\gamma,\delta}\mid \alpha,\gamma \in \mathbb{Z}_{2p}^{*}, \beta,\delta \in \mathbb{Z}_{2p} \right\}, $$
		where $ \sigma_{\alpha,\beta}(a) = a^{\alpha} $, $\sigma_{\alpha,\beta}(b) = a^{\beta}b $ and $\tau_{\gamma,\delta}(a) = a^{\gamma}$, $\tau_{\gamma,\delta}(b) = a^{\delta}b^3$.
	\end{lemma}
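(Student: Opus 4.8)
The plan is to exploit the fact that any automorphism preserves the order of each element, so that $\phi(a)$ and $\phi(b)$ are forced to lie among the elements of $T_{8p}$ having the same order as $a$ and $b$ respectively. Since $a$ has order $2p$ and $b$ has order $8$, the whole argument reduces to (i) classifying the elements of order $2p$ and of order $8$, and then (ii) checking that every admissible pair of images actually extends to an automorphism. Throughout I would work with the normal form $a^{r}b^{l}$ ($0\le r<2p$, $0\le l<4$) and the commutation rules of Lemma \ref{Lem:computations}.

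For the classification I would first treat the elements $a^{r}b^{l}$ with $l$ odd. Using $b^{l}a^{r}=a^{-r}b^{l}$ one gets $(a^{r}b^{l})^{2}=b^{2l}$, and since $b^{2}$ and $b^{6}$ both have order $4$, every such element has order exactly $8$; this already accounts for all $4p$ elements $a^{r}b$ and $a^{r}b^{3}$. For $l$ even the elements lie in the abelian subgroup $\langle a,b^{2}\rangle$ of order $4p$ (here $b^{2}$ commutes with $a$ and $(b^{2})^{2}=a^{p}$), whose $2$-part has exponent $4$, so none of them can have order $8$. Among these, $a^{r}$ has order $2p$ exactly when $\gcd(r,2p)=1$, i.e. $r\in\mathbb{Z}_{2p}^{*}$, whereas a direct computation gives $(a^{r}b^{2})^{2p}=a^{p}\neq e$, so no element $a^{r}b^{2}$ has order $2p$. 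Consequently the elements of order $2p$ are precisely $\{a^{\alpha}\mid\alpha\in\mathbb{Z}_{2p}^{*}\}$ and the elements of order $8$ are precisely $\{a^{r}b,\,a^{r}b^{3}\mid 0\le r<2p\}$. Hence for any $\phi\in\Aut(T_{8p})$ we must have $\phi(a)=a^{\alpha}$ with $\alpha\in\mathbb{Z}_{2p}^{*}$, and either $\phi(b)=a^{\beta}b$ or $\phi(b)=a^{\delta}b^{3}$; that is, $\phi$ is necessarily of the form $\sigma_{\alpha,\beta}$ or $\tau_{\gamma,\delta}$.

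For the converse I would verify that each candidate map respects the four defining relations and is therefore a genuine endomorphism. The relations $a^{2p}=e$ and $b^{8}=e$ are immediate, while $b^{-1}ab=a^{-1}$ follows from the fact that conjugation by $a^{\beta}b$ (or by $a^{\delta}b^{3}$) still inverts $a$. The only relation needing the arithmetic of the parameters is $a^{p}=b^{4}$: here I would use that $\alpha$ is odd (being coprime to $2p$) to get $a^{\alpha p}=a^{p}$, together with $(a^{\beta}b)^{4}=(b^{2})^{2}=b^{4}=a^{p}$ and likewise $(a^{\delta}b^{3})^{4}=b^{12}=a^{p}$. Since the image of such a map contains $a^{\alpha}$ (hence $a$, as $\alpha\in\mathbb{Z}_{2p}^{*}$) and therefore also $b$, the map is surjective and, $T_{8p}$ being finite, bijective. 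Finally I would note that distinct parameter pairs give distinct maps, and that a map of type $\sigma$ can never coincide with one of type $\tau$ since $a^{\beta}b\neq a^{\delta}b^{3}$ (as $b^{2}\notin\langle a\rangle$); this yields exactly $4p(p-1)$ automorphisms and completes the identification.

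The main obstacle is the order classification in the second paragraph: one has to be sure that no element with $l=2$ sneaks into order $2p$ and that every element with $l$ odd has order exactly $8$ rather than a proper divisor. The clean identities $(a^{r}b^{l})^{2}=b^{2l}$ and $(a^{r}b^{2})^{2p}=a^{p}$ are what make this watertight; once the two order classes are pinned down, everything else is a routine (if slightly tedious) verification of the defining relations.
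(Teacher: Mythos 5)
Your proof is correct and follows essentially the same route as the paper: both arguments classify the elements of $T_{8p}$ by order, use order-preservation to force $\phi(a)$ into $\{a^{\alpha}\mid\alpha\in\mathbb{Z}_{2p}^{*}\}$ and $\phi(b)$ into $\{a^{r}b,a^{r}b^{3}\}$, and then check that each such assignment yields an automorphism. The only difference is one of detail: the paper records the order classification in a table and leaves the verification as ``one can easily verify,'' whereas you supply the identities $(a^{r}b^{l})^{2}=b^{2l}$ and $(a^{r}b^{2})^{2p}=a^{p}$ and explicitly check the defining relations, which makes the argument self-contained.
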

	\begin{proof}
Note that Table \ref{Order} gives the orders of elements in the group $T_{8p}$, where $\gcd(x,y)$ denotes the greatest common divisor of $x$ and $y$, and $\mathrm{lcm}(x,y)$ denotes the least common multiple of $x$ and $y$.
		\begin{table}[ht]
			\caption{\textbf{ Orders of Elements in $T_{8p}$ }}
			\label{Order}
			\centering
			\begin{tabular}{c|c|c}
				\hline
				Type & Elements & Orders   \\
				\hline
				I & $a^i$ & $\frac{2p}{\gcd(i,2p)}$  \\
				\hline
				II & $ a^ib^j $~($j$ is even) & $ \mathrm{lcm}\left(\frac{2p}{\gcd(i,2p)}, 4\right)$ \\
				\hline
				III & $ a^ib^j $~($j$ is odd)  &  8  \\
				\hline
			\end{tabular}
		\end{table}
		
Suppose that $\sigma_{\alpha,\beta}$ and $\tau_{\gamma,\delta}$ are two automorphisms of $T_{8p}$. Since  every  automorphism of $T_{8p}$ is uniquely determined by its action on the generators $a$ and $b$, the elements $\sigma_{\alpha,\beta}(a),\tau_{\gamma,\delta}(a) $ and $\sigma_{\alpha,\beta}(b),\tau_{\gamma,\delta}(b)$ must  have orders $2p$ and $8$, respectively, because  $o(a)=2p$ and $o(b) = 8$. Note that $2p \not\equiv 0 \pmod {4}$ when $p$ is an odd prime. By Table \ref{Order}, we know that elements of order $2p$ in $T_{8p}$ are $ a^i$ satisfying $\gcd(i,2p)=1$,  and the elements of order $8$ in $T_{8p}$ are  either $a^ib$ or $a^ib^3$ for $0\leq i \leq 2p-1$. Then one can easily verify that the automorphism group
		$\Aut(T_{8p})$ consists of two types of mappings:
		\begin{equation*}
			\begin{cases}
				\sigma_{\alpha,\beta}(a) = a^{\alpha}, \\
				\sigma_{\alpha,\beta}(b) = a^{\beta}b,
			\end{cases} \text{and} \quad
			\begin{cases}
				\tau_{\gamma,\delta}(a) = a^{\gamma}, \\
				\tau_{\gamma,\delta}(b) = a^{\delta}b^3,
			\end{cases}
		\end{equation*} where $ \alpha,\gamma \in \mathbb{Z}_{2p}^{*}$, $\beta,\delta \in \mathbb{Z}_{2p}$, which are equivalent to
		\begin{equation}\label{equ:Aut}
			\begin{cases}
				\sigma_{\alpha,\beta}(a^i) = a^{i\alpha}, \\
				\sigma_{\alpha,\beta}(a^ib) = a^{i\alpha+\beta}b, \\
				\sigma_{\alpha,\beta}(a^ib^2) = a^{i\alpha}b^2, \\
				\sigma_{\alpha,\beta}(a^ib^3) = a^{i\alpha+\beta}b^3,
			\end{cases} \text{and} \quad
			\begin{cases}
				\tau_{\gamma,\delta}(a^i) = a^{i\gamma}, \\
				\tau_{\gamma,\delta}(a^ib) = a^{i\gamma+\delta}b^3, \\
				\tau_{\gamma,\delta}(a^ib^2) = a^{i\gamma+p}b^2, \\
				\tau_{\gamma,\delta}(a^ib^3) = a^{i\gamma+\delta}b,
			\end{cases}
		\end{equation}
where $i \in \mathbb{Z}_{2p}$. This completes the proof.
\qed\end{proof}


	\begin{lemma}\emph{(See \cite[Lemma~3.1]{WJing2024})}\label{lem:wj}
		Let $p$ be an odd prime. Fix an element $1\neq \alpha \in \mathbb{Z}_{2p}^*$. For any $t\in 2\mathbb{Z}_{2p}\backslash\{0\}=\{2,4,6,\ldots,2p-2\}$, there exists a unique $x\in \mathbb{Z}_{2p}^*$ such that
		$x-\alpha x=t $ in $2\mathbb{Z}_{2p}$.
	\end{lemma}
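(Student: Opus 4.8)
The plan is to read the equation $x-\alpha x=t$ as the linear congruence $(1-\alpha)x\equiv t\pmod{2p}$ and to establish the claim by exhibiting a bijection between the two finite sets in play. First I would record that both sets have the same size: $|\mathbb{Z}_{2p}^*|=\varphi(2p)=p-1$ (as noted before Lemma~\ref{Lem:Z2Pcyclic}), and $2\mathbb{Z}_{2p}\backslash\{0\}=\{2,4,\dots,2p-2\}$ also has $p-1$ elements. Consequently it suffices to show that the map $\phi:\mathbb{Z}_{2p}^*\to 2\mathbb{Z}_{2p}$, $\phi(x)=(1-\alpha)x$, is a well-defined injection whose image avoids $0$; surjectivity, and hence existence and uniqueness of $x$ for every $t$, then follows automatically from the equality of cardinalities.

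The decisive tool is the Chinese Remainder isomorphism $\mathbb{Z}_{2p}\cong\mathbb{Z}_2\times\mathbb{Z}_p$. Under it every unit of $\mathbb{Z}_{2p}$ is odd, hence of the form $(1,\ast)$; in particular $\alpha=(1,\alpha_p)$ with $\alpha_p\in\mathbb{Z}_p^*$, so that $1-\alpha=(0,1-\alpha_p)$. Two observations drive the argument. First, since every unit is odd, $1-\alpha$ is even, which shows at once that $\phi(x)\in 2\mathbb{Z}_{2p}$, i.e. the $\mathbb{Z}_2$-component is automatically trivial. Second, the hypothesis $\alpha\neq 1$ in $\mathbb{Z}_{2p}^*$ forces $\alpha\not\equiv 1\pmod p$, because an odd residue congruent to $1$ modulo $p$ would be congruent to $1$ modulo $2p$; therefore $1-\alpha_p\in\mathbb{Z}_p^*$ is invertible.

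With these in hand, writing $x=(1,x_p)$ and $t=(0,t_p)$ with $t_p\neq 0$, the congruence collapses to the single equation $(1-\alpha_p)\,x_p\equiv t_p\pmod p$ in the field $\mathbb{Z}_p$. Since $1-\alpha_p$ is a unit, this has the unique solution $x_p=(1-\alpha_p)^{-1}t_p$, and because $t_p\neq 0$ we get $x_p\neq 0$, so that $x=(1,x_p)$ is indeed a unit of $\mathbb{Z}_{2p}$; this yields existence and uniqueness simultaneously. (Injectivity of $\phi$ can be read off the same way: $(1-\alpha)(x_1-x_2)\equiv 0\pmod{2p}$ is automatic modulo $2$ and forces $x_1\equiv x_2\pmod p$, whence $x_1=x_2$ by oddness and CRT.)

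The only genuine subtlety, and the step I would flag as the main obstacle, is that $1-\alpha$ is a zero-divisor rather than a unit modulo $2p$ (being even), so one cannot simply divide through by it. Splitting via CRT is exactly what separates the harmless even factor, responsible for the image landing in $2\mathbb{Z}_{2p}$, from the invertible factor $1-\alpha_p$ modulo $p$; and checking that $\alpha\neq 1$ in $\mathbb{Z}_{2p}^*$ really does give $\alpha_p\neq 1$ is the key verification that makes this inversion legitimate.
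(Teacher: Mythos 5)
Your argument is correct. The paper itself gives no proof of this lemma --- it is quoted verbatim from \cite[Lemma~3.1]{WJing2024} --- so there is nothing internal to compare against, but your CRT decomposition $\mathbb{Z}_{2p}\cong\mathbb{Z}_2\times\mathbb{Z}_p$ is a clean, self-contained way to do it: you correctly identify the two essential verifications, namely that $\alpha\neq 1$ in $\mathbb{Z}_{2p}^*$ together with oddness of units forces $\alpha\not\equiv 1\pmod p$ (so $1-\alpha_p$ is invertible in $\mathbb{Z}_p$), and that every $t\in\{2,4,\dots,2p-2\}$ has nonzero residue modulo $p$ (the only multiples of $p$ in $\mathbb{Z}_{2p}$ are $0$ and the odd element $p$, both excluded), so the reduced equation $(1-\alpha_p)x_p\equiv t_p\pmod p$ has a unique nonzero solution and lifts uniquely to an odd unit of $\mathbb{Z}_{2p}$. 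Your closing remark pinpoints the one real subtlety --- $1-\alpha$ is a zero-divisor modulo $2p$, so naive division fails and the splitting is genuinely needed --- and the counting/bijection framing at the start, while not strictly necessary once you solve the congruence directly, does no harm.
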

	

\section{Enumerating Cayley graphs over the group $T_{8p}$} \label{graph}
	
Let
$$
D = \left\{ \{a^i,(a^i)^{-1} \},\ldots , \{a^p\},  \{a^jb,(a^jb)^{-1}\},\ldots,\{a^jb^2,(a^jb^2)^{-1}\},\ldots\right\},
$$
where $i\in \mathbb{Z}_{2p}\backslash \{0\},j\in \mathbb{Z}_{2p}$, and
$$
R=\{0,1\}.
$$
For $S \subseteq D $, let $f_{S}$ denote the characteristic function of $S$, that is,
	\begin{equation*}
		f_{S}\left(\{s,s^{-1}\}\right) = \left\{
		\begin{array}{cc}
			1, & \left\{s,s^{-1}\right\} \subseteq S,  \\
			0, & \text{otherwise}.
		\end{array}
		\right.
	\end{equation*}
Clearly, $f_{S}\in R^D= \{f\mid f:D \to R\}$, and $R^D$ consists of all characteristic functions on $D$. Note that $ \Aut(T_{8p})$ is a permutation group acting on $D$ and $R^D$. Note also Corollary \ref{cor:S=T} that two Cayley graphs $\Cay(T_{8p},S)$ and $\Cay(T_{8p},T)$ over $T_{8p}$ are isomorphic if and only if there exists an automorphism $\sigma \in \Aut(T_{8p})$ such that $\sigma(S)=T$, that is, characteristic functions $f_S, f_T \in R^D$ are $\Aut(T_{8p})$-equivalent. Therefore, the number of Cayley graphs on $T_{8p}$ up to isomorphism is equal to the number of orbits of the group action $(\Aut(T_{8p}),R^D)$. By Lemma \ref{Lem:Polya}, we should compute the cycle index $\mathcal{I}(\Aut(T_{8p}),D)$ of $\Aut(T_{8p})$ acting on $D$ for enumerating Cayley graphs over $T_{8p}$.
	
	For convenience, we set that $D=A\cup B$, where $A=A_{1}\cup A_{2} \cup \{a^p\}$, and
	$$ A_{1} = \{\bar{a}^{i}\mid 1\leq i \leq p-1 \},~~\text{where}~\bar{a}^i = \{a^i,a^{-i}\},$$
	$$ A_{2} = \left\{\bar{a}^lb^2 \mid 0\leq l \leq \frac{p-1}{2},~p+1\leq l \leq \frac{3p-1}{2}\right\},~~\text{where}~\bar{a}^lb^2=\{a^lb^2,a^{p-l}b^2\},$$
	$$ B=\{\bar{a}^jb\mid j \in \mathbb{Z}_{2p}\},~~\text{where}~\bar{a}^jb = \{a^jb,a^{p+j}b^3\}.$$
	
We obtain the cycle index $\mathcal{I}(\Aut(T_{8p}),D)$ as follows.

	\begin{lemma}\label{lem:graphcycletype}
		Let $p$ be an odd prime. Let $\sigma_{\alpha,1},\tau_{\gamma,\delta} \in \Aut(T_{8p})$ and $\mathbb{Z}_{2p}^* = \langle z \rangle $ as defined in Lemmas \ref{Lem:Z2Pcyclic} and \ref{Lem:Aut}. Under the action of $\Aut(T_{8p})$ on $D$, the cycle types of $\sigma_{\alpha,\beta}$, $\tau_{\gamma,\delta}$ are given by $\mathcal{T}(\sigma_{\alpha,\beta}) = (b_{1}(\sigma_{\alpha,\beta}),b_{2}(\sigma_{\alpha,\beta}), \ldots,b_{4p}(\sigma_{\alpha,\beta}))$ and $\mathcal{T}(\tau_{\gamma,\delta}) = (b_{1}(\tau_{\gamma,\delta}),b_{2}(\tau_{\gamma,\delta}), \ldots,b_{4p}(\tau_{\gamma,\delta}))$, respectively, where
		\begin{equation}\label{equ2:1bk1}
			b_k(\sigma_{1,\beta})=\left\{
			\begin{array}{ll}
				4p,      &\text{if}~ k=1  ~\text{and}~ \beta=0,\\[0.2cm]
				2p,       &\text{if}~ k=1  ~\text{and}~ \beta \in \mathbb{Z}_{2p}\backslash \{0\},\\[0.2cm]
				p,          &\text{if}~ k=2  ~\text{and}~ \beta =p,\\[0.2cm]
				2,         		&\text{if}~ k=p  ~\text {and}~ \beta \in 2\mathbb{Z}_{2p} \backslash\{0\},\\[0.2cm]
				1,          &\text{if}~ k=2p ~\text {and}~ \beta \in (2\mathbb{Z}_{2p}+1) \backslash\{p\},\\[0.2cm]
				0,          &\text{otherwise};
			\end{array}\right.
		\end{equation}
		\begin{equation}\label{equ2:2bk1}
			b_k(\tau_{1,\delta})=\left\{
			\begin{array}{ll}
				p+1,      &\text{if}~ k=1  ~\text {and}~ \delta\in \mathbb{Z}_{2p}\backslash \{p\},\\[0.2cm]
				3p+1,          &\text{if}~ k=1  ~\text {and}~ \delta =p,\\[0.2cm]
				\frac{p-1}{2},          &\text{if}~ k=2  ~\text {and}~ \delta \in \mathbb{Z}_{2p}\backslash\{0\},\\[0.2cm]
				\frac{3p-1}{2},       &\text{if}~ k=2  ~\text {and}~ \delta = 0,\\[0.2cm]
				2,          &\text{if}~ k=p ~\text {and}~ \delta \in 2\mathbb{Z}_{2p}+1 \backslash\{p\},\\[0.2cm]
				1,          &\text{if}~ k=2p ~\text {and}~ \delta \in 2\mathbb{Z}_{2p} \backslash\{0\},\\[0.2cm]
				0,          &\text{otherwise};
			\end{array}\right.
		\end{equation}
		\begin{equation}\label{equ2:1bkeven0}
			b_k(\sigma_{\alpha,\beta})=b_k(\sigma_{\alpha,0})=\left\{
			\begin{array}{ll}
				4,               & \text{if}~ k=1,\\[0.2cm]
				4\gcd(i_\alpha, p-1), & \text{if}~ k=\frac{p-1}{\gcd(i_\alpha, p-1)},\\[0.2cm]
				0,               & \text{otherwise},
			\end{array}\right.
		\end{equation}
where $1 \neq \alpha =z^{i_\alpha} \in \mathbb{Z}_{2p}^*$ (i.e., $i_\alpha\neq 0$) and $\beta \in 2\mathbb{Z}_{2p}$;
		\begin{equation}\label{equ2:1bkodd1}
			b_k(\sigma_{\alpha,\beta})=b_k(\sigma_{\alpha,1})=\left\{
			\begin{array}{ll}
				2,               & \text{if}~ k=1,\\[0.2cm]
				1,               & \text{if}~ k=2,\\[0.2cm]
				4\gcd(i_\alpha, p-1), & \text{if}~ k=\frac{p-1}{\gcd(i_\alpha, p-1)},~\text{and}~ \frac{p-1}{\gcd(i_\alpha, p-1)} ~\text{is~even},\\[0.2cm]
				2\gcd(i_\alpha, p-1), & \text{if}~ k=\frac{p-1}{\gcd(i_\alpha, p-1)},~\text{and}~ \frac{p-1}{\gcd(i_\alpha, p-1)} ~\text{is~odd},\\[0.2cm]
				\gcd(i_\alpha, p-1), & \text{if}~ k=\frac{2p-2}{\gcd(i_\alpha, p-1)},~\text{and}~ \frac{p-1}{\gcd(i_\alpha, p-1)} ~\text{is~odd},\\[0.2cm]
				0,               & \text{otherwise},
			\end{array}\right.
		\end{equation}
where $1 \neq \alpha =z^{i_\alpha} \in \mathbb{Z}_{2p}^*$ (i.e., $i_\alpha \neq 0$) and $\beta \in 2\mathbb{Z}_{2p}+1$;
		\begin{equation}\label{equ2:2bkeven0}
			b_{k}(\tau_{\gamma,\delta}) = b_{k}(\tau_{\gamma,0})=\left\{
			\begin{array}{ll}
				2,       & \text{if}~k=1,\\[0.2cm]
				1,       & \text{if}~k=2,\\[0.2cm]
				4\gcd(i_{\gamma},p-1),  & \text{if}~ k=\frac{p-1}{\gcd(i_\gamma, p-1)},~\text{and}~ \frac{p-1}{\gcd(i_\gamma, p-1)} ~\text{is~even},\\[0.2cm]
				\gcd(i_\gamma, p-1), & \text{if}~ k=\frac{p-1}{\gcd(i_\gamma, p-1)},~\text{and}~ \frac{p-1}{\gcd(i_\gamma, p-1)} ~\text{is~odd},\\[0.2cm]
				\frac{3\gcd(i_\gamma, p-1)}{2}, & \text{if}~ k=\frac{2p-2}{\gcd(i_\gamma, p-1)},~\text{and}~ \frac{p-1}{\gcd(i_\gamma, p-1)} ~\text{is~odd},\\[0.2cm]
				0,               & \text{otherwise},
			\end{array}
			\right.
		\end{equation}
where $1 \neq \gamma =z^{i_\gamma} \in \mathbb{Z}_{2p}^*$ (i.e., $i_\gamma \neq 0$) and $\delta \in 2\mathbb{Z}_{2p}$; and
		\begin{equation}\label{equ2:2bkodd1}
			b_{k}(\tau_{\gamma,\delta}) = b_{k}(\tau_{\gamma,1})=\left\{
			\begin{array}{ll}
				4,       & \text{if}~k=1,\\[0.2cm]
				4\gcd(i_{\gamma},p-1),  & \text{if}~ k=\frac{p-1}{\gcd(i_\gamma, p-1)},~\text{and}~ \frac{p-1}{\gcd(i_\gamma, p-1)} ~\text{is~even},\\[0.2cm]
				3\gcd(i_\gamma, p-1), & \text{if}~ k=\frac{p-1}{\gcd(i_\gamma, p-1)},~\text{and}~ \frac{p-1}{\gcd(i_\gamma, p-1)} ~\text{is~odd},\\[0.2cm]
				\frac{\gcd(i_\gamma, p-1)}{2}, & \text{if}~ k=\frac{2p-2}{\gcd(i_\gamma, p-1)},~\text{and}~ \frac{p-1}{\gcd(i_\gamma, p-1)} ~\text{is~odd},\\[0.2cm]
				0,               & \text{otherwise},
			\end{array}
			\right.
		\end{equation}
where $1 \neq \gamma =z^{i_\gamma} \in \mathbb{Z}_{2p}^*$ (i.e., $i_\gamma \neq 0$) and $\delta \in 2\mathbb{Z}_{2p}+1$.
	\end{lemma}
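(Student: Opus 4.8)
The plan is to use the block decomposition $D=A_1\cup A_2\cup\{a^p\}\cup B$ fixed just before the statement, to cut the dependence on $\beta$ and $\delta$ down to their parities by a conjugacy argument, and then to compute the cycle structure block by block through the Chinese Remainder isomorphism $\mathbb{Z}_{2p}\cong\mathbb{Z}_2\times\mathbb{Z}_p$.

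First I would record the group law in $\Aut(T_{8p})$. A direct check from Lemma~\ref{Lem:Aut} gives $\sigma_{\alpha,\beta}\sigma_{\alpha',\beta'}=\sigma_{\alpha\alpha',\,\alpha\beta'+\beta}$, whence $\sigma_{1,\epsilon}\,\sigma_{\alpha,\beta}\,\sigma_{1,\epsilon}^{-1}=\sigma_{\alpha,\,\beta+(1-\alpha)\epsilon}$ and, by an analogous computation, $\sigma_{1,\epsilon}\,\tau_{\gamma,\delta}\,\sigma_{1,\epsilon}^{-1}=\tau_{\gamma,\,\delta+(1-\gamma)\epsilon}$. Conjugate permutations have equal cycle type, and for $\alpha\neq1$ we have $\alpha\not\equiv1\pmod p$ (the only unit congruent to $1$ modulo $p$ is $1$ itself), so $\gcd(1-\alpha,2p)=2$ and, by Lemma~\ref{lem:wj}, $(1-\alpha)\epsilon$ runs over all of $2\mathbb{Z}_{2p}$ as $\epsilon$ varies. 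Hence every $\sigma_{\alpha,\beta}$ with $\beta$ even (resp.\ odd) is conjugate to $\sigma_{\alpha,0}$ (resp.\ $\sigma_{\alpha,1}$), and likewise $\tau_{\gamma,\delta}$ is conjugate to $\tau_{\gamma,0}$ or $\tau_{\gamma,1}$. This is precisely the content of the reductions $b_k(\sigma_{\alpha,\beta})=b_k(\sigma_{\alpha,0})$, etc., and it leaves only the four representatives together with the abelian families $\sigma_{1,\beta}$ and $\tau_{1,\delta}$.

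Next, using \eqref{equ:Aut} and Lemma~\ref{Lem:computations}, I would read off the permutation each representative induces on the index of each block. For $\alpha=\gamma=1$ the maps are translations on $B$ and the identity (resp.\ $l\mapsto l+p$) on the other blocks, and the cycle lengths come from $\mathrm{ord}(\beta)=2p/\gcd(\beta,2p)$; this gives \eqref{equ2:1bk1} and \eqref{equ2:2bk1}. For $\alpha\neq1$ I pass to $\mathbb{Z}_{2p}\cong\mathbb{Z}_2\times\mathbb{Z}_p$, where multiplication by $\alpha$ fixes the $\mathbb{Z}_2$-coordinate and acts on $\mathbb{Z}_p^{*}$ as an element of order $\ell:=(p-1)/\gcd(i_\alpha,p-1)$ with exactly $\gcd(i_\alpha,p-1)$ orbits. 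The homogeneous maps are then easy: on $B$ (for $\sigma_{\alpha,0}$) the structure is read off directly, and on $A_2$ the folding $l\mapsto p-l$ flips the $\mathbb{Z}_2$-coordinate and so fuses the two equal-length cycles lying over the two sheets into a single cycle of the same length. The remaining index maps — $\sigma_{\alpha,1}$ on $B$ and the $\tau$-maps on $A_2$ and $B$ — are affine, and I would classify each by whether it has a fixed point: when it does (equivalently, when it fixes the $\mathbb{Z}_2$-sheet) I linearize it by a translation, and otherwise I obtain the cycle length as the $\mathrm{lcm}$ of the periods on the two coordinates.

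The step I expect to be the main obstacle is the $A_1$ block. There the folding $i\sim-i$ acts as negation on the $\mathbb{Z}_p$-coordinate while \emph{preserving} the $\mathbb{Z}_2$-coordinate, so it may either fix a length-$\ell$ cycle setwise or interchange two of them; the dichotomy is governed by whether $-1\in\langle\alpha\rangle$ in $\mathbb{Z}_p^{*}$, i.e.\ by the parity of $\ell$. When $\ell$ is odd the folding pairs cycles and halves their number, whereas when $\ell$ is even each length-$\ell$ cycle is folded onto itself and splits into a cycle of length $\ell/2$; this same parity also enters the $A_2$- and $B$-periods in the $\tau$-cases through the $\mathrm{lcm}$ computation above. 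Tracking this parity carefully and then summing the contributions of $A_1$, $A_2$, $\{a^p\}$, and $B$ is the fiddly heart of the argument, and it is what produces the four cycle-type vectors \eqref{equ2:1bkeven0}--\eqref{equ2:2bkodd1}.
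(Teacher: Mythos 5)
Your overall architecture is the same as the paper's: decompose $D$ into the blocks $A_1$, $A_2$, $\{a^p\}$, $B$, reduce $\beta$ and $\delta$ to their parities, and then compute the four representatives $\sigma_{\alpha,0},\sigma_{\alpha,1},\tau_{\gamma,0},\tau_{\gamma,1}$ directly. Your reduction via conjugation by $\sigma_{1,\epsilon}$ is a cleaner packaging of what the paper does (the paper conjugates only the restriction to $B$ by an explicit translation $\phi$ obtained from Lemma \ref{lem:wj}), and your observation that $\gcd(1-\alpha,2p)=2$ for $1\neq\alpha\in\mathbb{Z}_{2p}^*$ is exactly what makes that lemma applicable. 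Up to this point the two arguments coincide.

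The problem is the $A_1$ block --- the difficulty you flag there is real, but your (correct) resolution of it is inconsistent with the formulas you are supposed to prove. As you say, the folding $i\sim -i$ on $A_1$ preserves the parity of $i$, so when $\ell=o(\alpha)$ is even one has $\alpha^{\ell/2}\equiv-1\pmod{2p}$ and each $\alpha$-orbit folds onto itself, giving cycles of length $\ell/2$ in $A_1$. Concretely, for $p=5$ and $\alpha=3$ we have $3^2\equiv -1\pmod{10}$, so $\sigma_{3,0}$ acts on $A_1$ as the two transpositions $(\bar a^1,\bar a^3)(\bar a^2,\bar a^4)$, whence $b_2(\sigma_{3,0})\geq 2$; but \eqref{equ2:1bkeven0} forces $b_2(\sigma_{3,0})=0$ since $\tfrac{p-1}{\gcd(i_\alpha,p-1)}=4$ here. (For $p=3$, $\alpha=5$ the discrepancy is $b_1=6$ versus the claimed $b_1=4$.) So the computation you describe does not ``produce the four cycle-type vectors''; carried out honestly it contradicts \eqref{equ2:1bkeven0} and the $A_1$ contributions to the other three. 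The paper's own Case 2.1 sidesteps this by asserting that the cycle of $\bar a^i$ in $A_1$ has full length $o(\alpha)$, i.e.\ it ignores the identification $\bar a^{\alpha^{\ell/2}i}=\bar a^{-i}=\bar a^i$ --- precisely the point you isolate as the ``fiddly heart.'' You therefore need either to exhibit an error in your folding analysis (I do not see one) or to conclude that the stated formulas cannot be obtained this way; as written, your final sentence papers over a genuine contradiction between your intermediate claims and the target equations, and that is the gap in the proposal.
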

	\begin{proof}
By Lemma \ref{Lem:Aut}, for every $i \in \mathbb{Z}_{p}\backslash\{0\}$, $ l \in \{0,1,\ldots,\frac{p-1}{2},p+1,\ldots,\frac{3p-1}{2} \}$ and $j\in \mathbb{Z}_{2p}$, we have
		$$\sigma_{\alpha,\beta}(\bar{a}^i)=\bar{a}^{\alpha i},~~\sigma_{\alpha,\beta}(\bar{a}^lb^2) = \bar{a}^{\alpha l}b^2,~~\sigma_{\alpha,\beta}(a^p) = a^p,~~\sigma_{\alpha,\beta}(\bar{a}^jb) = \bar{a}^{\alpha j + \beta}b,$$
		$$\tau_{\gamma,\delta}(\bar{a}^i) = \bar{a}^{\gamma i},~~\tau_{\gamma,\delta}(\bar{a}^lb^2) = \bar{a}^{\gamma l+p}b^2,~~\tau_{\gamma,\delta}(a^p) = a^p,~~\tau_{\gamma,\delta}(\bar{a}^jb)=\bar{a}^{\gamma j+\delta+p}b,$$
		i.e.,
		$$ \sigma_{\alpha,\beta}(A_{1})=A_{1},~~\sigma_{\alpha,\beta}(A_{2}) = A_{2},~~\sigma_{\alpha,\beta}(a^p) = a^p,~~\sigma_{\alpha,\beta}(B) = B,$$
		$$ \tau_{\gamma,\delta}(A_{1}) = A_{1},~~\tau_{\gamma,\delta}(A_{2}) = A_{2},~~\tau_{\gamma,\delta}(a^p) = a^p,~~\tau_{\gamma,\delta}(B) = B.$$
		Hence,
		$$ b_{2p+1}(\sigma_{\alpha,\beta}) = b_{2p+2}(\sigma_{\alpha,\beta})=\cdots = b_{4p}(\sigma_{\alpha,\beta}) = 0,$$
		$$ b_{2p+1}(\tau_{\gamma,\delta}) = b_{2p+2}(\tau_{\gamma,\delta})=\cdots=b_{4p}(\tau_{\gamma,\delta}) = 0.$$
		Next, for $\sigma_{\alpha,\beta},\tau_{\gamma,\delta} \in \Aut(T_{8p})$, consider the following cases.

\noindent\emph{Case 1.} $\alpha=1$ and $\gamma = 1$.
	
\noindent\emph{Case 1.1.} $\alpha = 1$. In this case, we study the cycle type of $\sigma_{1,\beta}$ on $D=A\cup B$.
	
Note that $\sigma_{1,\beta}(a^p)=a^p$,  $\sigma_{1,\beta}(\bar{a}^i)=\bar{a}^i$ for each $i\in \mathbb{Z}_{p}\backslash \{0\}$ and $\sigma_{1,\beta}(\bar{a}^lb^2) = \bar{a}^lb^2$ for each $\bar{a}^lb^2\in A_{2}$. Then $\sigma_{1,\beta}$ splits $A$ into $1+(p-1)+p=2p$ cycles of length $1$.

Observe that $\sigma_{1,\beta}(\bar{a}^jb) = \bar{a}^{j+\beta}b$ for each $j \in \mathbb{Z}_{2p}$. If $\beta = 0$, then $\sigma_{1,0}(\bar{a}^jb) = \bar{a}^jb$ for each $j \in \mathbb{Z}_{2p}$. Thus $\sigma_{1,0}$ splits $B$ into $2p$ cycles of length $1$.
	
	If $\beta \in \mathbb{Z}_{2p}\backslash \{0\}$, the order of $\beta$ is
	\begin{equation*}
		o(\beta)=\frac{2p}{\gcd(\beta, 2p)}=\left\{
		\begin{array}{ll}
			2,     &\beta=p,\\[0.2cm]
			p,     &\beta \in 2\mathbb{Z}_{2p} \backslash\{0\},\\[0.2cm]
			2p,    &\beta \in (2\mathbb{Z}_{2p}+1) \backslash\{p\}.
		\end{array}\right.
	\end{equation*}
So, if $\beta = p$, then $\bar{a}^jb \in B$ is in the cycle
	$$(\bar{a}^jb,\sigma_{1,p}(\bar{a}^jb)) = (\bar{a}^jb,\bar{a}^{j+p}b)$$
	for any $j \in \mathbb{Z}_{2p}$. Thus $\sigma_{1,p}$ splits $B$ into $p$ cycles of length $2$. If $\beta \in  2\mathbb{Z}_{2p} \backslash\{0\}$, then $\bar{a}^jb\in B$ is in the cycle
	$$ \left(\bar{a}^jb,\sigma_{1,\beta}(\bar{a}^jb),\sigma_{1,\beta}^2(\bar{a}^jb),\ldots,\sigma_{1,\beta}^{p-1}(\bar{a}^jb)  \right) = \left(\bar{a}^jb,\bar{a}^{j+\beta}b,\bar{a}^{j+2\beta}b,\ldots,\bar{a}^{j+(p-1)\beta}b\right)
	$$
	for any $j\in \mathbb{Z}_{2p}$. Thus $\sigma_{1,\beta}$ splits $B$ into $2$ cycles of length $p$.
	If $\beta \in (2\mathbb{Z}_{2p}+1) \backslash\{p\}$, then $\bar{a}^jb\in B$ is in the cycle
	$$ \left(\bar{a}^jb,\sigma_{1,\beta}(\bar{a}^jb),\sigma_{1,\beta}^2(\bar{a}^jb),\ldots,\sigma_{1,\beta}^{2p-1}(\bar{a}^jb)  \right) = \left(\bar{a}^jb,\bar{a}^{j+\beta}b,\bar{a}^{j+2\beta}b,\ldots,\bar{a}^{j+(2p-1)\beta}b\right),
	$$
	for any $j\in \mathbb{Z}_{2p}$. Thus, the permutation $\sigma_{1,\beta}$ splits $B$ into $1$ cycle of length $2p$.
	
Therefore, we obtain the cycle type of $\sigma_{1,\beta}$, as shown in \eqref{equ2:1bk1}.
	
	\noindent\emph{Case 1.2.} $\gamma = 1$. In this case, we consider the cycle type of $\tau_{1,\delta}$ on $D=A\cup B$.
	
	If $\bar{a}^i \in A_{1}$ for any $ i \in \mathbb{Z}_{p}\backslash \{0\}$, then $\tau_{1,\delta}(\bar{a}^i )=\bar{a}^i $. Thus $\tau_{1,\delta}$ splits $A_{1}\cup\{a^p\}$ into $p$ cycles of length $1$. Notice that $\tau_{1,\delta}(b^2)=a^pb^2$, $\tau_{1,\delta}(a^pb^2) = b^2$ and $\bar{a}^0b^2=\{b^2,a^{p}b^2\}$. So $\tau_{1,\delta}(\bar{a}^0b^2)=\bar{a}^0b^2$. Note also that
	$$\tau_{1,\delta}(\bar{a}^lb^2)=\bar{a}^{l+p}b^2~~\text{and}~~ \tau_{1,\delta}^2(\bar{a}^lb^2) = \tau_{1,\delta}(\bar{a}^{l+p}b^2) = \bar{a}^lb^2$$ for every $\bar{a}^lb^2\in A_{2}\backslash\{\bar{a}^0b^2\}$. Thus $\tau_{1,\delta}$ splits $A_{2}$ into $\frac{p-1}{2}$ cycles of length $2$ and $1$ cycle of length $1$.
	
	Notice that $\tau_{1,\delta}(\bar{a}^jb)  = \bar{a}^{j+\delta+p}b$ for $j\in \mathbb{Z}_{2p}$. If $\delta=0$, then
	$$\tau_{1,0}^2(\bar{a}^jb) = \tau_{1,0}(\bar{a}^{j+p}b) = \bar{a}^jb.$$ Thus $\tau_{1,0}$ splits $B$ into $p$ cycles of length $2$. If $ \delta \in \mathbb{Z}_{2p} \backslash \{0\}$, then $\bar{a}^jb\in B$ is in the cycle
	$$ \left( \bar{a}^jb,\tau_{1,\delta}(\bar{a}^jb),\tau_{1,\delta}^2(\bar{a}^jb),\ldots,\tau_{1,\delta}^{m-1}(\bar{a}^jb)   \right) = \left(\bar{a}^jb,\bar{a}^{j+\delta+p}b,\bar{a}^{j+2\delta+2p}b,\ldots,\bar{a}^{j+(m-1)\delta+(m-1)p}b\right),$$
	where $m$ is the least positive integer such that
	$$ j + m\delta+mp \equiv j \pmod{2p},$$
	that is,
	$$ m(\delta+p) \equiv 0 \pmod {2p}.$$
	If $\delta = p$, then $m=1$, and then $\tau_{1,\delta}$ splits $B$ into $2p$ cycles of length $1$. If $\delta \in 2\mathbb{Z}_{2p}\backslash\{0\}$, then $m=2p$, and then $\tau_{1,\delta}$ splits $B$ into $1$ cycle of length $2p$. If $\delta \in 2\mathbb{Z}_{2p}+1\backslash\{p\}$, then $m=p$, and then $\tau_{1,\delta}$ splits $B$ into $2$ cycles of length $p$.
	
Therefore, we obtain the cycle type of $\tau_{1,\delta}$, as shown in \eqref{equ2:2bk1}.

	\noindent\emph{Case 2.} $1 \neq \alpha = z^{i_{\alpha}}$ and $1 \neq \gamma = z^{i_{\gamma}}$.

	By Lemma \ref{Lem:Z2Pcyclic}, the orders of $\alpha, \gamma \in \mathbb{Z}_{2p}^*$ are
	$$ o(\alpha) = o(z^{i_{\alpha}}) = \frac{p-1}{\gcd(i_{\alpha},p-1)}~~\text{and}~~o(\gamma) = o(z^{i_{\gamma}})=\frac{p-1}{\gcd(i_{\gamma},p-1)}.$$
	
	Next, we prove that
$$\mathcal{T}(\sigma_{\alpha,\beta})=\mathcal{T}(\sigma_{\alpha,0}),~ \mathcal{T}(\sigma_{\alpha,\beta+1})=\mathcal{T}(\sigma_{\alpha,1}),~~ \forall~\beta \in 2\mathbb{Z}_{2p} \backslash \{0\}, $$
$$\mathcal{T}(\tau_{\gamma,\delta})=\mathcal{T}(\tau_{\gamma,0}),~ \mathcal{T}(\tau_{\gamma,\delta+1})=\mathcal{T}(\tau_{\gamma,1}),~~ \forall~\delta \in 2\mathbb{Z}_{2p} \backslash \{0\}.$$
	
Recall that
	$$ \sigma_{\alpha,\beta}(\bar{a}^i) = \bar{a}^{\alpha i},~~\sigma_{\alpha,\beta}(\bar{a}^lb^2) =\bar{a}^{\alpha l}b^2,~~\sigma_{\alpha,\beta}(a^p)= a^p,~~\forall~\beta \in \mathbb{Z}_{2p}, $$
	$$ \tau_{\gamma,\delta}(\bar{a}^i)  = \bar{a}^{\gamma i},~~\tau_{\gamma,\delta}(\bar{a}^lb^2) =  \bar{a}^{\gamma l+p}b^2,~~\tau_{\gamma,\delta}(a^p)= a^p, ~~\forall~\delta \in \mathbb{Z}_{2p}.$$
Then all $\sigma_{\alpha,\beta}$'s (respectively, $\tau_{\gamma,\delta}$'s) have the same cycle type in $A$.

Consider the cycle type of $\sigma_{\alpha,\beta}$ in $B$. Let $\beta \in 2\mathbb{Z}_{2p}\backslash\{0\}$. By Lemma \ref{lem:wj}, there exists a unique $x \in \mathbb{Z}_{2p}^*$ such that $x-\alpha x = \beta$. Define a bijection $\phi$ in $B$ such that
$$
\phi(\bar{a}^jb) = \bar{a}^{j+x}b,~j \in \mathbb{Z}_{2p},
$$
where
	$$ \phi(a^jb) = a^{j+x}b,~~\phi(a^{p+j}b^3)=a^{(p+j)+x}b^3,~j \in \mathbb{Z}_{2p}.$$

	 Assume that
$$
(\bar{a}^{j_{0}}b,\bar{a}^{j_{1}}b,\ldots,\bar{a}^{j_{r-1}}b)~~(j_{0},j_{1},\ldots,j_{r-1} \in \mathbb{Z}_{2p})
$$
	is a cycle of $\sigma_{\alpha,0}$ in $B$. Then $j_{l} = \alpha j_{l-1}$ for  $l=1,2,\ldots,r-1$. Then we have
	$$\sigma_{\alpha,\beta}(\phi(\bar{a}^{j_{l-1}}b)) = \sigma_{\alpha,\beta}(\bar{a}^{j_{l-1}+x}b) = \bar{a}^{\alpha j_{l-1} + \alpha x + \beta}b = \bar{a}^{j_{l}+x}b = \phi(\bar{a}^{j_{l}}b).$$
	Thus $(\phi(\bar{a}^{j_{0}}b),\phi(\bar{a}^{j_{1}}b),\ldots,\phi(\bar{a}^{j_{r-1}}b))$ is a cycle of $\sigma_{\alpha,\beta}$ in $B$. Therefore, $\sigma_{\alpha,\beta}$ and $\sigma_{\alpha,0}$ have the same cycle type in $B$.
	
	 Assume that
$$
(\bar{a}^{j_{0}}b,\bar{a}^{j_{1}}b,\ldots,\bar{a}^{j_{r-1}}b)~~(j_{0},j_{1},\ldots,j_{r-1} \in \mathbb{Z}_{2p})
$$
is  a cycle of $\sigma_{\alpha,1}$ in $B$. Then $j_{l} = \alpha j_{l-1}+1$ for  $l=1,2,\ldots,r-1$.  Then we have
	$$\sigma_{\alpha,\beta+1}(\phi(\bar{a}^{j_{l-1}}b)) = \sigma_{\alpha,\beta+1}(\bar{a}^{j_{l-1}+x}b) = \bar{a}^{\alpha j_{l-1} + \alpha x + \beta+1}b = \bar{a}^{j_{l}+x}b = \phi(\bar{a}^{j_{l}}b).$$
	Thus $(\phi(\bar{a}^{j_{0}}b),\phi(\bar{a}^{j_{1}}b),\ldots,\phi(\bar{a}^{j_{r-1}}b))$
	is a cycle of $\sigma_{\alpha,\beta+1}$ in $B$. Therefore, $\sigma_{\alpha,\beta+1}$ and $\sigma_{\alpha,1}$ have the same cycle type  in $B$.
	
	Similarly, if $\delta \in 2\mathbb{Z}_{2p}\backslash \{0\}$, one can verify that $\tau_{\gamma,\delta}$ and $\tau_{\gamma,0}$ have the same cycle type in $B$, and $\tau_{\gamma,\delta+1}$ and $\tau_{\gamma,1}$ have the same cycle type in $B$, respectively.

In the following, we study the cycle types of $\sigma_{\alpha,0}$, $\sigma_{\alpha,1}$, $\tau_{\gamma,0}$ and $\tau_{\gamma,1}$, respectively.

	\noindent\emph{Case 2.1.} In this case, we consider the cycle type of $\sigma_{\alpha,0}$ on $D=A\cup B$.
	
	Notice that
	$$\sigma_{\alpha,0}(a^p)=a^p,~~\sigma_{\alpha,0}(\bar{a}^0b^2)=\bar{a}^0b^2, ~~\sigma_{\alpha,0}(\bar{a}^0b)=\bar{a}^0b,~~\sigma_{\alpha,0}(\bar{a}^pb)=\bar{a}^pb.$$
Note also that $\bar{a}^i\in A_{1}$ is in the cycle
	$$ (\bar{a}^i,\sigma_{\alpha,0}(\bar{a}^i),\sigma_{\alpha,0}^2(\bar{a}^i),\ldots,\sigma_{\alpha,0}^{o(\alpha)-1}(\bar{a}^i)) = (\bar{a}^i,\bar{a}^{\alpha i},\bar{a}^{\alpha^2 i},\ldots,\bar{a}^{\alpha^{o(\alpha)-1}i}),$$
and $a^lb^2 \in A_{2}\backslash\{\bar{a}^0b^2\}$, $\bar{a}^jb \in B\backslash\{\bar{a}^0b,\bar{a}^pb\}$ are respectively in the cycles
	$$ (\bar{a}^lb^2,\sigma_{\alpha,0}(\bar{a}^lb^2),\sigma_{\alpha,0}^2(\bar{a}^lb^2),\ldots,\sigma_{\alpha,0}^{o(\alpha)-1}(\bar{a}^lb^2)) = (\bar{a}^lb^2,\bar{a}^{\alpha l}b^2,\bar{a}^{\alpha^2 l}b^2,\ldots,\bar{a}^{\alpha^{o(\alpha)-1}l}b^2) ,$$
	$$ (\bar{a}^jb,\sigma_{\alpha,0}(\bar{a}^jb),\sigma_{\alpha,0}^2(\bar{a}^jb),\ldots,\sigma_{\alpha,0}^{o(\alpha)-1}(\bar{a}^jb)) = (\bar{a}^jb,\bar{a}^{\alpha j}b,\bar{a}^{\alpha^2 j}b,\ldots,\bar{a}^{\alpha^{o(\alpha)-1}j}b).$$
Thus $\sigma_{\alpha,0}$ splits $A$ into $\frac{2(p-1)}{o(\alpha)} = 2\gcd(i_{\alpha},p-1) $ cycles of length $o(\alpha) = \frac{p-1}{\gcd(i_{\alpha},p-1)}$  and 2 cycles of length $1$,  and   splits $B$ into $2$ cycles of length $1$ and $\frac{2p-2}{o(\alpha)} = 2\gcd(i_{\alpha},p-1)$ cycles of length $o(\alpha) = \frac{p-1}{\gcd(i_{\alpha},p-1)}$.
	
	Therefore, we obtain the cycle type of $\sigma_{\alpha,0}$, as shown in \eqref{equ2:1bkeven0}.

	\noindent\emph{Case 2.2.} In this case, we study the cycle type of $\sigma_{\alpha,1}$ on $D=A\cup B$.
	
Recall that all $\sigma_{\alpha,\beta}$'s have the same cycle type in $A$. Hence, $\sigma_{\alpha,1}$ splits $A$ into $\frac{2(p-1)}{o(\alpha)} = 2\mathrm{gcd}(i_{\alpha},p-1)$ cycles of length $o(\alpha)$ and $2$ cycles of length $1$.

Note that $\bar{a}^jb \in B$ is in the cycle
	$$ (\bar{a}^jb,\sigma_{\alpha,1}(\bar{a}^jb),\ldots,\sigma_{\alpha,1}^{m-1}(\bar{a}^jb) )= (\bar{a}^jb,\bar{a}^{\alpha j+1}b,\bar{a}^{\alpha^2j+\alpha + 1}b,\ldots,\bar{a}^{\alpha^{m-1}j+\alpha^{m-2}+\cdots+1}b),$$
	where $m$ is the least positive integer such that
	$$
\alpha^mj+\alpha^{m-1}+\cdots+\alpha^2+\alpha+1 \equiv j \pmod {2p},
$$
	that is,
	$$
(\alpha^{m-1}+\alpha^{m-2}+\cdots+\alpha+1)((\alpha-1)j+1) \equiv 0 \pmod {2p}.
$$

If $(\alpha-1)j+1 \equiv p \pmod {2p}$, then
\begin{equation}\label{equ:case2.2}
	(\alpha-1)j \equiv p-1 \pmod{2p}
\end{equation}
and
$$
 \alpha^{m-1}+\alpha^{m-2}+\cdots+\alpha+1 \equiv 0 \pmod 2.
 $$
So $m=2$. Notice that $\gcd(\alpha-1,p)=1$. Then the general solution of \eqref{equ:case2.2} is given by
$$ j=  (\alpha-1)^{-1}(p-1)+tp,~~t\in \mathbb{Z},$$
where $(\alpha-1)^{-1}$ is the inverse of $(\alpha-1)$ such that $(\alpha-1)^{-1}(\alpha-1) \equiv 1\pmod{p}$. Thus, $j\in \mathbb{Z}_{2p}$ can only be
$$(\alpha-1)^{-1}(p-1)~~\text{or}~~ (\alpha-1)^{-1}(p-1) + p \pmod{2p}.$$
Note that
 \begin{align*}
	\sigma_{\alpha,1}(\bar{a}^{ (\alpha-1)^{-1}(p-1)}b) & = \bar{a}^{\alpha  (\alpha-1)^{-1}(p-1) +1}b \\
	& = \bar{a}^{ (\alpha-1)(\alpha-1)^{-1}(p-1) +(\alpha-1)^{-1}(p-1)+ 1}b \\
	 	& =\bar{a}^{ (\alpha-1)^{-1}(p-1) +p}b.
\end{align*}
Then $\bar{a}^{ (\alpha-1)^{-1}(p-1) }b$ is in the cycle $(\bar{a}^{ (\alpha-1)^{-1}(p-1) }b,\bar{a}^{ (\alpha-1)^{-1}(p-1) +p}b)$. Thus we obtain $1$ cycle of length $2$.

If  $(\alpha-1)j+1 \not\equiv p \pmod{2p}$, then $o((\alpha-1)j+1 ) = 2p $ in $\mathbb{Z}_{2p}$. Hence,
	$$ \alpha^{m-1}+\alpha^{m-2}+\cdots+\alpha+1 \equiv 0 \pmod {2p}.$$
	Notice that the least positive integer of $y$ such that
	$$ \alpha^y -1 = (\alpha-1)(\alpha^{y-1}+\alpha^{y-2}+\cdots+1) \equiv 0 \pmod {2p}$$
	is $o(\alpha)$ in the multiplicative group $\mathbb{Z}_{2p}^*$.
	Then $ m \geq o(\alpha)$. Since $\alpha \in \mathbb{Z}_{2p}^{*}$ and $\alpha -1 \neq 2p,p$, we have
	$$ p \mid (\alpha^{o(\alpha) -1}+\alpha^{o(\alpha)-2}+\cdots + 1).$$
	If $o(\alpha)$ is even, then $\alpha^{o(\alpha) -1}+\alpha^{o(\alpha)-2}+\cdots + 1 $ is even. Hence $m=o(\alpha)$. Thus, $\sigma_{\alpha,1}$ splits $B$ into $\frac{2p-2}{o(\alpha)} = 2\gcd(i_{\alpha},p-1)$ cycles of length $o(\alpha)$ and one cycle of length $2$. If $o(\alpha)$ is odd, then $\alpha^{o(\alpha) -1}+\alpha^{o(\alpha)-2}+\cdots + 1  = p$. Since $\alpha^{o(\alpha)}+1 = 2$,
	$$ 2(\alpha^{o(\alpha) -1}+\alpha^{o(\alpha)-2}+\cdots + 1 ) = \alpha^{2o(\alpha)-1} + \alpha^{2o(\alpha)-2} + \cdots + 1 = 2p.$$
	So $m=2o(\alpha)$. Thus $\sigma_{\alpha,1}$ splits $B$ into $\frac{2p-2}{2o(\alpha)} = \gcd(i_{\alpha},p-1)$ cycles of length $2o(\alpha)$ and one cycle of length $2$ for odd $o(\alpha)$.
	
	Therefore, we obtain the cycle type of $\sigma_{\alpha,1}$, as shown in \eqref{equ2:1bkodd1}.

	\noindent\emph{Case 2.3.} In this case, we consider the cycle type of $\tau_{\gamma,0}$ on $D=A\cup B$.
	
If $\gamma = \alpha$, then $ \tau_{\gamma,0}$ has the same cycle type as $\sigma_{\alpha,0}$ in $A_{1}\cup \{a^p\}$. Hence, $\tau_{\gamma,0}$ splits $A_{1}\cup \{a^p\}$ into $\frac{p-1}{o(\gamma)} = \gcd(i_{\gamma},p-1)$ cycles of length $o(\gamma) = \frac{p-1}{\gcd(i_{\gamma},p-1)}$ and one cycle of length $1$.

Recall that $\tau_{\gamma,0}(\bar{a}^lb^2) = \bar{a}^{\gamma l+p}b^2,~\forall~\bar{a}^lb^2 \in A_{2}$. If $l=0$, then
	$$
\bar{a}^0b^2 = \{b^2,a^pb^2\},~~\tau_{\gamma,0}(\bar{a}^0b^2) = \bar{a}^0b^2.
$$
So $(\bar{a}^0b^2)$ is a cycle of length $1$.

Notice that $\bar{a}^lb^2\in A_{2}\backslash\{\bar{a}^0b^2\}$  is in the cycle
	\begin{equation*}
		(\bar{a}^lb^2, \tau_{\gamma,0}(\bar{a}^lb^2),\ldots,\tau_{\gamma,0}^{m-1}(\bar{a}^lb^2)) = (\bar{a}^lb^2,\bar{a}^{\gamma l + p}b^2,\ldots , \bar{a}^{\gamma^{m-1}l+\gamma^{m-2}p+\cdots+p}b^2),
	\end{equation*}
	where $m$ is the least positive integer such that
	\begin{equation}\label{equ:case2.3}
		\gamma^{m}l+\gamma^{m-1}p+\cdots+\gamma p +p
		\equiv l \pmod {2p},
	\end{equation}
	i.e.,
	$$
 (\gamma^{m-1}+\gamma^{m-2}+\cdots+\gamma+1)((\gamma-1)l+p) \equiv 0 \pmod {2p}.
$$

	Since $l\neq 0,p$, and $1 \neq \gamma\in \mathbb{Z}_{2p}^{*}$ and $\gamma$ is odd, we have
	$$(\gamma-1)l + p \equiv 1 \pmod 2 ~~\text{and}~~ (\gamma-1)l + p \not\equiv p \pmod {2p} .$$
	Then
	$$ \gamma^{m-1}+\gamma^{m-2}+\cdots + 1 \equiv 0  \pmod {2p}.$$
Thus $m$ must be even.
Note also that
	$$
\gamma^m -1 = (\gamma-1)(\gamma^{m-1}+\gamma^{m-2}+\cdots+1) \equiv 0 \pmod {2p}.
$$
Thus  $o(\gamma) \mid m$.
	
If $o(\gamma)$ is even, then $m = o(\gamma)$. Thus, $\tau_{\gamma,0}$ splits $A_{2}$ into $\frac{p-1}{o(\gamma)} = \gcd(i_{\gamma},p-1)$ cycles of length $o(\gamma)$ and one cycle of length $1$. If $o(\gamma)$ is odd, then $m = 2o(\gamma)$. Thus, $\tau_{\gamma,0}$ splits $A_{2}$ into $\frac{p-1}{2o(\gamma)} =  \frac{\gcd(i_{\gamma},p-1)}{2}$ cycles of length $2o(\gamma)$ and one cycle of length $1$.

	Recall that $\tau_{\gamma,0}(\bar{a}^jb) = \bar{a}^{\gamma j+p}b,~\forall~\bar{a}^jb \in B$. If $j = 0$ or $j=p$, then
	$$\tau_{\gamma,0}(\bar{a}^0b) = \bar{a}^pb ~~\text{and }~~\tau_{\gamma,0}(\bar{a}^pb) = \bar{a}^0b.$$
	So $(\bar{a}^0b,\bar{a}^pb)$ is a  cycle of length $2$.
	
	Notice that $\bar{a}^jb \in B\backslash\{\bar{a}^0b,\bar{a}^pb\}$ is in the cycle
	$$ (\bar{a}^jb,\tau_{\gamma,0}(\bar{a}^jb),\ldots,\tau_{\gamma,0}^{m-1}(\bar{a}^jb) )=
	(\bar{a}^jb, \bar{a}^{\gamma j+p}b , \ldots, \bar{a}^{\gamma^{{m-1}}j+\gamma^{m-2}p+\cdots+p}b),$$
	where $m$ is the least positive integer such that
	$$ \gamma^mj+\gamma^{m-1}p+\cdots+\gamma p + p \equiv j \pmod {2p},$$
	$j\neq 0,p$, and $1\neq \gamma \in \mathbb{Z}_{2p}^{*}$.
	
	Similar to the discussion on \eqref{equ:case2.3}, one can verify that: if $o(\gamma)$ is even, then $\tau_{\gamma,0}$ splits $B$ into $\frac{2p-2}{o(\gamma)} =  2\gcd(i_{\gamma},p-1)$ cycles of length $o(\gamma)$ and 1 cycle of length $2$; if $o(\gamma)$ is odd,  then $\tau_{\gamma,0}$ splits $B$ into $\frac{2p-2}{2o(\gamma)} =  \gcd(i_{\gamma},p-1)$ cycles of length $2o(\gamma)$ and 1 cycle of length $2$.
	
	Therefore, we obtain the cycle type of $\tau_{\gamma,0}$, as shown in \eqref{equ2:2bkeven0}.

	\noindent\emph{Case 2.4.} In this case, we study the cycle type of $\tau_{\gamma,1}$ on $D=A\cup B$.
	
Recall that all $\tau_{\gamma,\delta}$'s have the same cycle type in $A$. Then $\tau_{\gamma,1}$ has the same cycle type as $\tau_{\gamma,0}$ in $A$.
So, if $o(\gamma)$ is even, then $\tau_{\gamma,1}$ splits $A$  into $ 2\gcd(i_{\gamma},p-1)$ cycles of length $o(\gamma)$ and $2$ cycles of length $1$; if $o(\gamma)$ is odd, then $\tau_{\gamma,1}$ splits $A$  into $ \gcd(i_{\gamma},p-1)$ cycles of length $o(\gamma)$, $\frac{\gcd(i_\gamma,p-1)}{2}$ cycles of length $2o(\gamma)$ and $2$ cycles of length $1$.

	Recall that  $\tau_{\gamma,1}(\bar{a}^jb) =  \bar{a}^{\gamma j +1+p}b,~\forall~ \bar{a}^jb\in B$. Note that $\bar{a}^jb \in B$ is in the cycle
	$$ (\bar{a}^jb,\tau_{\gamma,1}(\bar{a}^jb),\ldots,\tau_{\gamma,1}^{m-1}(\bar{a}^jb) )= (\bar{a}^jb,\bar{a}^{\gamma j+1+p}b,\bar{a}^{\gamma^2j+\gamma + \gamma p + 1+p}b,\ldots,\bar{a}^{\gamma^{m-1}j+\gamma^{m-2}(1+p)+\cdots+1+p}b), $$
	where $m$ is the least positive integer such that
	$$ \gamma^mj + (\gamma^{m-1}+\gamma^{m-2}+\cdots+\gamma+1)(1+p) \equiv j \pmod {2p},$$
	i.e.,
	$$ (\gamma^{m-1}+\gamma^{m-2}+\cdots+\gamma+1)((\gamma-1)j+(1+p)) \equiv 0 \pmod {2p}.$$

	If $(\gamma-1)j+1+p \equiv 0 \pmod {2p}$, then
	\begin{equation}\label{equ:case2.4}
		(\gamma-1)j \equiv p-1\pmod{2p}
	\end{equation}
	 and
	$$ (\gamma^{m-1}+\gamma^{m-2}+\cdots+\gamma+1)((\gamma-1)j+(1+p)) \equiv 0 \pmod {2p}. $$
So $m=1$. 
	Notice that $\gcd(\gamma-1,p)=1$. Then the general solution of \eqref{equ:case2.4} is given by
	$$ j =  (\gamma-1)^{-1}(p-1)+tp,~~t\in \mathbb{Z},$$
	where $(\gamma-1)^{-1}$ is the inverse of $(\gamma-1)$ such that $(\gamma-1)^{-1}(\gamma-1) \equiv 1\pmod{p}$. Thus $j\in \mathbb{Z}_{2p}$ can only be
	$$ (\gamma-1)^{-1}(p-1)~~\text{or}~~ (\gamma-1)^{-1}(p-1)+ p \pmod{2p}.$$
Hence we obtain $2$ cycles of length $1$.

	If $(\gamma-1)j+1+p \not\equiv 0 \pmod {2p}$, then $(\gamma-1)j+1+p\equiv 0 \pmod {2}$. Thus
	$$
\gamma^{m-1}+\gamma^{m-2}+\cdots+\gamma+1 \equiv 0 \pmod { p},
$$
that is,
	$$
\gamma^m -1 = (\gamma-1)(\gamma^{m-1}+\gamma^{m-2}+\cdots+1)  \equiv 0  \pmod {p}.
$$
Then $o'(\gamma) \mid m$, where $o'(\gamma)$ denotes the order of $\gamma$ in $\mathbb{Z}_{p}^{*}$ such that $\gamma^{o'(\gamma)}\equiv 1\pmod{p}$. Hence $m= o'(\gamma)$.

Notice that $p\mid 2p$ and  $\gamma^{o(\gamma)}\equiv 1\pmod{2p}~~\text{for all $\gamma \in \mathbb{Z}_{2p}^*\backslash\{1\}$}$.
Then
	$$\gamma^{o(\gamma)} \equiv 1\pmod{p}~~\text{and}~~o'(\gamma) \mid o(\gamma).$$
	Since $\gamma \in \mathbb{Z}_{2p}^*\backslash\{1\}$ is odd and $\gcd(2,p)=1$, it follows that $$\gamma^{o'(\gamma)}\equiv 1\pmod{2}~~\text{and}~~\gamma^{o'(\gamma)}\equiv 1 \pmod{2p}.$$
	So, $o(\gamma) \mid o'(\gamma)$. Thus $o(\gamma) = o'(\gamma)$ for all $\gamma \in \mathbb{Z}_{2p}^*\backslash\{1\}$.
	
	Hence
	$ m = o(\gamma)$, and $\tau_{\gamma,1}$ splits $B$ into $\frac{2p-2}{o(\gamma)} = 2\gcd(i_{\gamma},p-1)$ cycles of length $o(\gamma)$ and $2$ cycles of length $1$.
	
	Therefore, we obtain the cycle type of $\tau_{\gamma,1}$, as shown in \eqref{equ2:2bkodd1}.
	\qed\end{proof}
	\begin{lemma}\label{lem:graphcycleindex}
		Let $D=A\cup B $. Then the cycle index of $\Aut(T_{8p})$ acting on $D$ is given by
		\begin{align*}
			& \mathcal{I}(\Aut(T_{8p}),D)  \\[0.3cm]
			=& \frac{1}{4p}\left(-x_{1}^{4p}+x_{1}^{2p}(-x_{2}^{p}+x_{p}^2+x_{2p}) -x_{1}^{p+1}x_{2}^{\frac{3p-1}{2}}+x_{2}^{\frac{p-1}{2}}(-x_{1}^{3p+1}+x_{1}^{p+1}x_{p}^2+x_{1}^{p+1}x_{2p})\right) \\[0.3cm]
&+ \frac{1}{4(p-1)}x_{1}^{4}\sum\limits_{\substack{d \mid (p-1)}}\varphi(d)x_{d}^{\frac{4(p-1)}{d}}
			+ \frac{1}{4(p-1)}(2x_{1}^2x_{2}+x_{1}^4)\sum_{\substack{d \mid p-1 \\d\text{~is~even}}}\varphi(d)x_{d}^{\frac{4(p-1)}{d}}
			\\[0.3cm]
&+ \frac{1}{4(p-1)}x_{1}^2x_{2}\sum_{\substack{d \mid (p-1)\\{d \text{~is~odd}}}}\varphi(d)\left(x_{d}^{\frac{2(p-1)}{d}}x_{2d}^{\frac{p-1}{d}} + x_{d}^{\frac{p-1}{d}}x_{2d}^{\frac{3(p-1)}{2d}}\right)
			+ \frac{1}{4(p-1)}x_{1}^4\sum_{\substack{d \mid p-1 \\ {d}\text{~is~odd}}}\varphi(d)x_{d}^{\frac{3(p-1)}{d}}x_{2d}^{\frac{(p-1)}{2d}},
		\end{align*}
		where $\varphi(\cdot)$ denotes the Euler's totient function.
	\end{lemma}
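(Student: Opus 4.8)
The plan is to evaluate the defining sum $\mathcal{I}(\Aut(T_{8p}),D)=\frac{1}{|\Aut(T_{8p})|}\sum_{h}\prod_{k}x_k^{b_k(h)}$ directly, reading off each monomial $\prod_k x_k^{b_k(h)}$ from the cycle types computed in Lemma \ref{lem:graphcycletype}. First I would record that $|\Aut(T_{8p})|=2\cdot|\mathbb{Z}_{2p}^*|\cdot|\mathbb{Z}_{2p}|=4p(p-1)$, so the global prefactor is $\frac{1}{4p(p-1)}$. I would then split the summation along the two families $\sigma_{\alpha,\beta}$ and $\tau_{\gamma,\delta}$, and within each family separate the degenerate cases $\alpha=1$ (resp. $\gamma=1$) from the generic ones $\alpha\neq1$ (resp. $\gamma\neq1$), since Lemma \ref{lem:graphcycletype} treats these by different formulas.

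For the generic cases I would exploit the key reduction from Case 2 of Lemma \ref{lem:graphcycletype}: for a fixed $\alpha\neq1$ the cycle type of $\sigma_{\alpha,\beta}$ depends only on the parity of $\beta$, equalling $\mathcal{T}(\sigma_{\alpha,0})$ for the $p$ even values of $\beta$ and $\mathcal{T}(\sigma_{\alpha,1})$ for the $p$ odd values; likewise for $\tau_{\gamma,\delta}$. Since $\mathbb{Z}_{2p}^*\cong\mathbb{Z}_{p-1}$ is cyclic (Lemma \ref{Lem:Z2Pcyclic}), for each divisor $d\mid(p-1)$ there are exactly $\varphi(d)$ elements $\alpha$ with $o(\alpha)=d$, i.e.\ with $\gcd(i_\alpha,p-1)=\frac{p-1}{d}$, so the exponents $\frac{4(p-1)}{d}$, $\frac{2(p-1)}{d}$, etc.\ appearing in \eqref{equ2:1bkeven0}--\eqref{equ2:2bkodd1} are precisely governed by $d$. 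Substituting these monomials and collecting by $d$, weighted by the multiplicity $p\,\varphi(d)$ (the extra factor $p$ coming from the even/odd $\beta$-fibre and cancelling the $p$ in the prefactor), produces the four divisor-sums with coefficient $\frac{1}{4(p-1)}$. Here I would note that $\sigma_{\alpha,0}$ contributes the pure $x_1^4\sum_d\varphi(d)x_d^{4(p-1)/d}$ term for all $d$, while the even-$d$ monomials of $\sigma_{\alpha,1}$, $\tau_{\gamma,0}$ and $\tau_{\gamma,1}$ share the common factor $x_d^{4(p-1)/d}$ and hence assemble into the $(2x_1^2x_2+x_1^4)$ coefficient, and the odd-$d$ monomials of $\sigma_{\alpha,1}$, $\tau_{\gamma,0}$ (sharing $x_1^2x_2$) and of $\tau_{\gamma,1}$ (carrying $x_1^4$) group into the remaining two sums.

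The one subtlety is that the formulas \eqref{equ2:1bkeven0}--\eqref{equ2:2bkodd1} are valid only for $\alpha,\gamma\neq1$, whereas my divisor-sums run over all $d\mid(p-1)$, including $d=1$. I would handle this by a single correction step: include $d=1$ freely in the four sums and then subtract the spurious $d=1$ monomials (each of multiplicity $p$) while adding back the true contributions of $\sigma_{1,\beta}$ and $\tau_{1,\delta}$, which I compute directly from \eqref{equ2:1bk1} and \eqref{equ2:2bk1}. A short calculation shows the spurious terms total $p\bigl(x_1^{4p}+x_1^{2p}x_2^p+x_1^{p+1}x_2^{(3p-1)/2}+x_1^{3p+1}x_2^{(p-1)/2}\bigr)$, and subtracting these from the genuine $\alpha=1,\gamma=1$ totals leaves an expression with a common factor $(p-1)$; dividing by $4p(p-1)$ cancels this factor and yields exactly the bracketed $\frac{1}{4p}$ term. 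The main obstacle is purely bookkeeping: correctly matching each case in \eqref{equ2:1bk1}--\eqref{equ2:2bkodd1} to its monomial, tracking the even/odd-$d$ dichotomy, and verifying that the $d=1$ discrepancies coalesce into the clean $(p-1)$ multiple that collapses to the $\frac{1}{4p}$ correction.
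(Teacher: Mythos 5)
Your proposal is correct and follows essentially the same route as the paper's proof: the same prefactor $\tfrac{1}{4p(p-1)}$, the same split into the $\sigma$- and $\tau$-families and into $\alpha=1$ (resp.\ $\gamma=1$) versus generic cases, the same use of the parity-of-$\beta$ reduction to produce the factor $p$, the same conversion of the sums over $\alpha=z^{i_\alpha}$ into divisor sums weighted by $\varphi(d)$, and the same completion of those sums at $d=1$ against the true $\sigma_{1,\beta}$, $\tau_{1,\delta}$ contributions to produce the $\tfrac{1}{4p}$ bracket (your computed spurious total $p\bigl(x_1^{4p}+x_1^{2p}x_2^{p}+x_1^{p+1}x_2^{(3p-1)/2}+x_1^{3p+1}x_2^{(p-1)/2}\bigr)$ matches what the paper's final manipulation implicitly subtracts). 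No gaps; the remaining work is exactly the bookkeeping you describe.
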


	\begin{proof}
By Lemma \ref{lem:graphcycletype}, the cycle index of $\Aut(T_{8p})$ acting on $D=A\cup B$ is given by
		\begin{align*}
			&\mathcal{I}\left(\Aut(T_{8p}), D\right)\\
=&\frac{1}{|\Aut(T_{8 p})|} \left( \sum\limits_{{\sigma_{\alpha,\beta}} \in \Aut(T_{8p})} x_1^{b_1(\sigma_{\alpha,\beta})}  \cdots x_{4p}^{b_{4p}(\sigma_{\alpha,\beta})}
			+ \sum\limits_{{\tau_{\gamma,\delta}} \in \Aut(T_{8p})} x_1^{b_1(\tau_{\gamma,\delta})} \cdots x_{4p}^{b_{4p}(\tau_{\gamma,\delta})} \right) \\[0.3cm]
=&\frac{1}{2\cdot 2p \cdot (p-1)}\left(
			\sum\limits_{\alpha \in \mathbb{Z}_{2p}^{*}}\sum\limits_{\beta \in \mathbb{Z}_{2p}}
			x_1^{b_1(\sigma_{\alpha,\beta})}  \cdots x_{2p}^{b_{2p}(\sigma_{\alpha,\beta})} +
			\sum\limits_{\gamma \in \mathbb{Z}_{2p}^*}\sum\limits_{\delta \in \mathbb{Z}_{2p}}
			x_1^{b_1(\tau_{\gamma,\delta})} \cdots x_{2p}^{b_{2p}(\tau_{\gamma,\delta})}
			\right)\\[0.3cm]
=&\frac{1}{4p(p-1)}\left(
			\sum\limits_{\beta \in \mathbb{Z}_{2p}}
			x_{1}^{b_{1}(\sigma_{1,\beta})}\cdots  x_{2p}^{b_{2p}(\sigma_{1,\beta})}+ \sum\limits_{\delta \in \mathbb{Z}_{2p}}
			x_{1}^{b_{1}(\tau_{1,\delta})}\cdots  x_{2p}^{b_{2p}(\tau_{1,\delta})}
			\right. \\[0.3cm]
			&  +
			\sum\limits_{\alpha \in \mathbb{Z}_{2p}^*\backslash \{1\}} \left(
			\sum\limits_{\beta \in 2\mathbb{Z}_{2p}}
			x_1^{b_1(\sigma_{\alpha,\beta})}  \cdots x_{2p}^{b_{2p}(\sigma_{\alpha,\beta})}
			+ \sum\limits_{\beta \in 2\mathbb{Z}_{2p}+1}
			x_1^{b_1(\sigma_{\alpha,\beta})}  \cdots x_{2p}^{b_{2p}(\sigma_{\alpha,\beta})}
			\right)  \\[0.3cm]
			& \left. +
			\sum\limits_{\gamma \in \mathbb{Z}_{2p}^*\backslash\{1\}}
			\left(\sum\limits_{\delta \in 2\mathbb{Z}_{2p}}
			x_1^{b_1(\tau_{\gamma,\delta})}  \cdots x_{2p}^{b_{2p}(\tau_{\gamma,\delta})} +
			\sum\limits_{\delta \in 2\mathbb{Z}_{2p}+1}
			x_1^{b_1(\tau_{\gamma,\delta})}  \cdots x_{2p}^{b_{2p}(\tau_{\gamma,\delta})} \right)
			\right)\\[0.3cm]
=& \frac{1}{4p(p-1)} \left(
			x_{1}^{4p}+x_{1}^{2p}x_{2}^{p} + (p-1)x_{1}^{2p}x_{p}^2 + (p-1)x_{1}^{2p}x_{2p}
			\right) \\[0.3cm]
			&+ \frac{1}{4p(p-1)} \left(x_{1}^{p+1}x_{2}^{\frac{3p-1}{2}}+x_{1}^{3p+1}x_{2}^{\frac{p-1}{2}}+(p-1)x_{1}^{p+1}x_{2}^{\frac{p-1}{2}}x_{p}^2+(p-1)x_{1}^{p+1}x_{2}^{\frac{p-1}{2}}x_{2p}\right)
			\\[0.3cm]
			&+ \frac{1}{4p(p-1)}\left(p\sum\limits_{\alpha \in \mathbb{Z}_{2p}^*\backslash \{1\}}x_{1}^{b_{1}(\sigma_{\alpha,0})}\cdots x_{2p}^{b_{2p}(\sigma_{\alpha,0})}
			+
			p\sum\limits_{\alpha \in \mathbb{Z}_{2p}^*\backslash \{1\}}x_{1}^{b_{1}(\sigma_{\alpha,1})}\cdots x_{2p}^{b_{2p}(\sigma_{\alpha,1})}\right.  \\[0.3cm]
			&+\left.p\sum\limits_{\gamma \in \mathbb{Z}_{2p}^*\backslash \{1\}}x_{1}^{b_{1}(\tau_{\gamma,0})}\cdots x_{2p}^{b_{2p}(\tau_{\gamma,0})}
			+
			p\sum\limits_{\gamma \in \mathbb{Z}_{2p}^*\backslash \{1\}}x_{1}^{b_{1}(\tau_{\gamma,1})}\cdots x_{2p}^{b_{2p}(\tau_{\gamma,1})}
			\right) \\[0.3cm]
=& \frac{1}{4p(p-1)} \left(\left(x_{1}^{4p}+x_{1}^{2p}x_{2}^{p} + (p-1)x_{1}^{2p}x_{p}^2 + (p-1)x_{1}^{2p}x_{2p}
			\right) + \left( x_{1}^{p+1}x_{2}^{\frac{3p-1}{2}}+x_{1}^{3p+1}x_{2}^{\frac{p-1}{2}}\right.\right. \\[0.3cm]
			& \left.\left.+(p-1)x_{1}^{p+1}x_{2}^{\frac{p-1}{2}}x_{p}^2 +(p-1)x_{1}^{p+1}x_{2}^{\frac{p-1}{2}}x_{2p}\right)\right)
			  +  \frac{1}{4(p-1)} x_{1}^{4}\sum\limits_{\alpha=z^{i_{\alpha}} \in \mathbb{Z}_{2p}^{*} \backslash \{1\}}x_{\frac{p-1}{\gcd(i_{\alpha},p-1)}}^{4\gcd(i_{\alpha},p-1)} 	\\[0.3cm]
			&+
			 \frac{1}{4(p-1)} x_{1}^2x_{2} \left(  \sum_{\substack{\alpha=z^{i_{\alpha}} \in \mathbb{Z}_{2p}^{*} \backslash \{1\}\\\frac{p-1}{\gcd(i_\alpha, p-1)} \text{~is~even}}}x_{\frac{p-1}{\gcd(i_\alpha, p-1)}}^{4\gcd(i_\alpha,p-1)}
			+
			\sum_{\substack{\alpha=z^{i_{\alpha}} \in \mathbb{Z}_{2p}^{*} \backslash \{1\}\\\frac{p-1}{\gcd(i_\alpha, p-1)} \text{~is~odd}}}x_{\frac{p-1}{\gcd(i_\alpha, p-1)}}^{2\gcd(i_\alpha,p-1)}x_{\frac{2p-2}{\gcd(i_\alpha, p-1)}}^{\gcd(i_\alpha,p-1)}
			\right) \\[0.3cm]
			&+  \frac{1}{4(p-1)} x_{1}^2x_{2} \left(  \sum_{\substack{\gamma=z^{i_{\gamma}} \in \mathbb{Z}_{2p}^{*} \backslash \{1\}\\\frac{p-1}{\gcd(i_\gamma, p-1)} \text{~is~even}}}x_{\frac{p-1}{\gcd(i_\gamma, p-1)}}^{4\gcd(i_\gamma,p-1)}
			+
			\sum_{\substack{\gamma=z^{i_{\gamma}} \in \mathbb{Z}_{2p}^{*} \backslash \{1\}\\\frac{p-1}{\gcd(i_\gamma, p-1)} \text{~is~odd}}}x_{\frac{p-1}{\gcd(i_\gamma, p-1)}}^{\gcd(i_\gamma,p-1)}x_{\frac{2p-2}{\gcd(i_\gamma, p-1)}}^{\frac{3\gcd(i_\gamma,p-1)}{2}}
			\right) \\[0.3cm]
			&+
			 \frac{1}{4(p-1)} x_{1}^4 \left(
			\sum_{\substack{\gamma=z^{i_{\gamma}} \in \mathbb{Z}_{2p}^{*} \backslash \{1\}\\\frac{p-1}{\gcd(i_\gamma, p-1)} \text{~is~even}}}x_{\frac{p-1}{\gcd(i_\gamma, p-1)}}^{4\gcd(i_\gamma,p-1)}+
			\sum_{\substack{\gamma=z^{i_{\gamma}} \in \mathbb{Z}_{2p}^{*} \backslash \{1\}\\\frac{p-1}{\gcd(i_\gamma, p-1)} \text{~is~odd}}}x_{\frac{p-1}{\gcd(i_\gamma, p-1)}}^{3\gcd(i_\gamma,p-1)}x_{\frac{2p-2}{\gcd(i_\gamma, p-1)}}^{\frac{\gcd(i_\gamma,p-1)}{2}}
			\right)   \\[0.3cm]
=& \frac{1}{4p(p-1)} \left(\left(x_{1}^{4p}+x_{1}^{2p}x_{2}^{p} + (p-1)x_{1}^{2p}x_{p}^2 + (p-1)x_{1}^{2p}x_{2p}
			\right) + \left( x_{1}^{p+1}x_{2}^{\frac{3p-1}{2}}+x_{1}^{3p+1}x_{2}^{\frac{p-1}{2}}\right.\right. \\[0.3cm]
			& \left.\left.+(p-1)x_{1}^{p+1}x_{2}^{\frac{p-1}{2}}x_{p}^2 +(p-1)x_{1}^{p+1}x_{2}^{\frac{p-1}{2}}x_{2p}\right)\right)
			  +  \frac{1}{4(p-1)} x_{1}^{4}\sum\limits_{i_\alpha\in \mathbb{Z}_{p-1}\backslash\{0\}}x_{\frac{p-1}{\gcd(i_{\alpha},p-1)}}^{4\gcd(i_{\alpha},p-1)} 	\\[0.3cm]
			& +\frac{1}{4(p-1)}x_{1}^2x_{2} \left(  \sum_{\substack{i_\alpha\in \mathbb{Z}_{p-1}\backslash\{0\}\\\frac{p-1}{\gcd(i_\alpha, p-1)} \text{~is~even}}}x_{\frac{p-1}{\gcd(i_\alpha, p-1)}}^{4\gcd(i_\alpha,p-1)}
			+
			\sum_{\substack{i_\alpha\in \mathbb{Z}_{p-1}\backslash\{0\}\\\frac{p-1}{\gcd(i_\alpha, p-1)} \text{~is~odd}}}x_{\frac{p-1}{\gcd(i_\alpha, p-1)}}^{2\gcd(i_\alpha,p-1)}x_{\frac{2p-2}{\gcd(i_\alpha, p-1)}}^{\gcd(i_\alpha,p-1)}
			\right) \\[0.3cm]
			&+\frac{1}{4(p-1)}x_{1}^2x_{2} \left(  \sum_{\substack{i_{\gamma} \in \mathbb{Z}_{p-1}\backslash\{0\}\\\frac{p-1}{\gcd(i_\gamma, p-1)} \text{~is~even}}}x_{\frac{p-1}{\gcd(i_\gamma, p-1)}}^{4\gcd(i_\gamma,p-1)}
			+
			\sum_{\substack{i_{\gamma} \in \mathbb{Z}_{p-1}\backslash\{0\}\\\frac{p-1}{\gcd(i_\gamma, p-1)} \text{~is~odd}}}x_{\frac{p-1}{\gcd(i_\gamma, p-1)}}^{\gcd(i_\gamma,p-1)}x_{\frac{2p-2}{\gcd(i_\gamma, p-1)}}^{\frac{3\gcd(i_\gamma,p-1)}{2}}
			\right) \\[0.3cm]
			&+\frac{1}{4(p-1)}x_{1}^4 \left(
			\sum_{\substack{i_{\gamma} \in \mathbb{Z}_{p-1}\backslash\{0\}\\\frac{p-1}{\gcd(i_\gamma, p-1)} \text{~is~even}}}x_{\frac{p-1}{\gcd(i_\gamma, p-1)}}^{4\gcd(i_\gamma,p-1)}+
			\sum_{\substack{i_{\gamma} \in \mathbb{Z}_{p-1}\backslash\{0\}\\\frac{p-1}{\gcd(i_\gamma, p-1)} \text{~is~odd}}}x_{\frac{p-1}{\gcd(i_\gamma, p-1)}}^{3\gcd(i_\gamma,p-1)}x_{\frac{2p-2}{\gcd(i_\gamma, p-1)}}^{\frac{\gcd(i_\gamma,p-1)}{2}}
			\right)  \\[0.3cm]
			=& \frac{1}{4p(p-1)}\left(\left(
			x_{1}^{4p}+x_{1}^{2p}x_{2}^{p} + (p-1)x_{1}^{2p}x_{p}^2 + (p-1)x_{1}^{2p}x_{2p}
			\right)
			+ \left(x_{1}^{p+1}x_{2}^{\frac{3p-1}{2}}
			+x_{1}^{3p+1}x_{2}^{\frac{p-1}{2}} \right. \right.	\\[0.3cm]
&\left. \left.+(p-1)x_{1}^{p+1}x_{2}^{\frac{p-1}{2}}x_{p}^2 +(p-1)x_{1}^{p+1}x_{2}^{\frac{p-1}{2}}x_{2p}\right)\right)
			+ \frac{1}{4(p-1)}x_{1}^{4}\sum\limits_{\substack{d \mid (p-1)\\{d\neq 1}}}\varphi(d)x_{d}^{\frac{4(p-1)}{d}} 	\\[0.3cm]
			&+\frac{1}{4(p-1)}x_{1}^2x_{2} \left(
			\sum_{\substack{d \mid p-1 \\d \text{~is~even}}}\varphi(d)x_{d}^{\frac{4(p-1)}{d}}+\sum_{\substack{d \mid p-1 \\ {d\neq 1,~d} \text{~is~odd}}}\varphi(d)x_{d}^{\frac{2(p-1)}{d}}x_{2d}^{\frac{(p-1)}{d}}
			\right)  \\[0.3cm]	
				&+\frac{1}{4(p-1)}x_{1}^2x_{2} \left(
			\sum_{\substack{d \mid p-1 \\d \text{~is~even}}}\varphi(d)x_{d}^{\frac{4(p-1)}{d}}+\sum_{\substack{d \mid p-1 \\ {d\neq 1,~d} \text{~is~odd}}}\varphi(d)x_{d}^{\frac{(p-1)}{d}}x_{2d}^{\frac{3(p-1)}{2d}}
			\right)  \\[0.3cm]	
				&+\frac{1}{4(p-1)}x_{1}^4 \left(
			\sum_{\substack{d \mid p-1 \\d \text{~is~even}}}\varphi(d)x_{d}^{\frac{4(p-1)}{d}}+\sum_{\substack{d \mid p-1 \\ {d\neq 1,~d} \text{~is~odd}}}\varphi(d)x_{d}^{\frac{3(p-1)}{d}}x_{2d}^{\frac{(p-1)}{2d}}
			\right)  \\[0.3cm]	
=& \frac{1}{4p(p-1)}\left(\left(
			(1-p)x_{1}^{4p}+(1-p)x_{1}^{2p}x_{2}^{p} + (p-1)x_{1}^{2p}x_{p}^2 + (p-1)x_{1}^{2p}x_{2p}
			\right) + \left((1-p)x_{1}^{p+1}x_{2}^{\frac{3p-1}{2}} \right. \right. 	\\[0.3cm]
			&\left.\left.
			+(1-p)x_{1}^{3p+1}x_{2}^{\frac{p-1}{2}}  +(p-1)x_{1}^{p+1}x_{2}^{\frac{p-1}{2}}x_{p}^2 +(p-1)x_{1}^{p+1}x_{2}^{\frac{p-1}{2}}x_{2p}\right)\right)\\[0.3cm]
			&+\frac{1}{4(p-1)}x_{1}^{4}\sum\limits_{d \mid (p-1)}\varphi(d)x_{d}^{\frac{4(p-1)}{d}}  	
+\frac{1}{4(p-1)}
			(2x_{1}^2x_{2}+x_{1}^4) \sum_{\substack{d \mid p-1 \\d \text{~is~even}}}\varphi(d)x_{d}^{\frac{4(p-1)}{d}} \\[0.3cm]	
&+ \frac{1}{4(p-1)}x_{1}^2x_{2}\left(
			\sum_{\substack{d \mid p-1 \\d \text{~is~odd}}}\varphi(d)x_{d}^{\frac{2(p-1)}{d}}x_{2d}^{\frac{(p-1)}{d}} +
			\sum_{\substack{d \mid p-1 \\ d \text{~is~odd}}}\varphi(d)x_{d}^{\frac{(p-1)}{d}}x_{2d}^{\frac{3(p-1)}{2d}}
			\right)  \\[0.3cm]	
			&+\frac{1}{4(p-1)}x_{1}^4\sum_{\substack{d \mid p-1 \\ d \text{~is~odd}}}\varphi(d)x_{d}^{\frac{3(p-1)}{d}}x_{2d}^{\frac{(p-1)}{2d}} \\[0.3cm]	
=& \frac{1}{4p}\left(-x_{1}^{4p}+x_{1}^{2p}(-x_{2}^{p}+x_{p}^2+x_{2p}) -x_{1}^{p+1}x_{2}^{\frac{3p-1}{2}}+x_{2}^{\frac{p-1}{2}}(-x_{1}^{3p+1}+x_{1}^{p+1}x_{p}^2+x_{1}^{p+1}x_{2p})\right) \\[0.3cm]
&+ \frac{1}{4(p-1)}x_{1}^{4}\sum\limits_{\substack{d \mid (p-1)}}\varphi(d)x_{d}^{\frac{4(p-1)}{d}}
			+ \frac{1}{4(p-1)}(2x_{1}^2x_{2}+x_{1}^4)\sum_{\substack{d \mid p-1 \\d\text{~is~even}}}\varphi(d)x_{d}^{\frac{4(p-1)}{d}}
			\\[0.3cm]
&+ \frac{1}{4(p-1)}x_{1}^2x_{2}\sum_{\substack{d \mid (p-1)\\{d \text{~is~odd}}}}\varphi(d)\left(x_{d}^{\frac{2(p-1)}{d}}x_{2d}^{\frac{p-1}{d}} + x_{d}^{\frac{p-1}{d}}x_{2d}^{\frac{3(p-1)}{2d}}\right)
			+ \frac{1}{4(p-1)}x_{1}^4\sum_{\substack{d \mid p-1 \\ {d}\text{~is~odd}}}\varphi(d)x_{d}^{\frac{3(p-1)}{d}}x_{2d}^{\frac{(p-1)}{2d}},
		\end{align*}
		where $\varphi(\cdot)$ denotes the Euler's totient function.	
	\qed\end{proof}
	By Lemmas \ref{Lem:Polya} and  \ref{lem:graphcycleindex}, we get the number of Cayley graphs over $T_{8p}$ up to isomorphism immediately.
	\begin{theorem}\label{thm:graphnumber}
		Let $p$ be an odd prime. Then the number of Cayley graphs over $T_{8p}$ up to isomorphism is equal to
		\begin{align}\label{equ:number}
			\nonumber\mathcal{N}
			=& \frac{1}{4p}\left(-2^{4p}+2^{2p}(-2^{p}+6)-2^{\frac{5p+1}{2}}+2^{\frac{p-1}{2}}(-2^{3p+1}+2^{p+3}+2^{p+2})\right) + \frac{4}{p-1}\sum\limits_{\substack{d \mid (p-1)}}\varphi(d)2^{\frac{4(p-1)}{d}}  \\[0.3cm]
			+&
			\frac{2^3}{p-1}\sum_{\substack{d \mid p-1 \\d \text{~is~even}}}\varphi(d)2^{\frac{4(p-1)}{d}}
			+ \frac{2}{p-1}\sum_{\substack{d \mid (p-1)\\{d \text{~is~odd}}}}\varphi(d)\left(2^{\frac{3(p-1)}{d}}+2^{\frac{5(p-1)}{2d}}\right)
			+ \frac{4}{p-1}\sum_{\substack{d \mid p-1 \\ {d} \text{~is~odd}}}\varphi(d)2^{\frac{7(p-1)}{2d}},
		\end{align}
		where $\varphi(\cdot)$ is the Euler's totient function.
	\end{theorem}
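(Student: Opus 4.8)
The plan is to apply the P\'olya Enumeration Theorem (Lemma \ref{Lem:Polya}) directly to the cycle index computed in Lemma \ref{lem:graphcycleindex}. Recall from the discussion preceding Lemma \ref{lem:graphcycletype} that the number of Cayley graphs over $T_{8p}$ up to isomorphism equals the number of orbits of the action $(\Aut(T_{8p}), R^D)$, and that $R = \{0,1\}$ so $m = |R| = 2$. By Lemma \ref{Lem:Polya}, this count is $P_{\Aut(T_{8p})}(2,2,\ldots,2)$, that is, the cycle index $\mathcal{I}(\Aut(T_{8p}), D)$ with every indeterminate $x_k$ set equal to $2$. Thus the whole proof reduces to substituting $x_k = 2$ into the explicit formula of Lemma \ref{lem:graphcycleindex} and collecting exponents.

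I would proceed term by term on the first bracketed group. Under $x_k \mapsto 2$ one has $x_1^{4p} \mapsto 2^{4p}$; the factor $x_1^{2p}(-x_2^p + x_p^2 + x_{2p})$ becomes $2^{2p}(-2^p + 2^2 + 2^1) = 2^{2p}(-2^p + 6)$; the monomial $x_1^{p+1} x_2^{(3p-1)/2}$ becomes $2^{(p+1) + (3p-1)/2} = 2^{(5p+1)/2}$; and $x_2^{(p-1)/2}(-x_1^{3p+1} + x_1^{p+1} x_p^2 + x_1^{p+1} x_{2p})$ becomes $2^{(p-1)/2}(-2^{3p+1} + 2^{p+3} + 2^{p+2})$. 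Together these reproduce the first line of $\mathcal{N}$ in the statement.

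For the four summation terms I would evaluate the leading monomial coefficients first: $x_1^4 \mapsto 2^4 = 16$, giving $16/(4(p-1)) = 4/(p-1)$; the factor $2x_1^2 x_2 + x_1^4 \mapsto 2\cdot 2^3 + 2^4 = 32$, giving $2^3/(p-1)$; and $x_1^2 x_2 \mapsto 2^3 = 8$, giving $2/(p-1)$. Inside each sum every factor $x_d^e$ or $x_{2d}^e$ becomes $2^e$, so I would merge the exponents using the identities $2(p-1)/d + (p-1)/d = 3(p-1)/d$, $(p-1)/d + 3(p-1)/(2d) = 5(p-1)/(2d)$, and $3(p-1)/d + (p-1)/(2d) = 7(p-1)/(2d)$. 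This yields precisely the four sums appearing in $\mathcal{N}$.

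The step demanding the most care is the exponent arithmetic for the odd-divisor sums, where the half-integer exponents $3(p-1)/(2d)$ and $(p-1)/(2d)$ appear: I must verify that $(p-1)/d + 3(p-1)/(2d)$ collapses to $5(p-1)/(2d)$ and that $3(p-1)/d + (p-1)/(2d)$ collapses to $7(p-1)/(2d)$ (here writing $3(p-1)/d = 6(p-1)/(2d)$ makes the common denominator explicit). These are the only places where an error could enter, since the substitution $x_k = 2$ is otherwise purely mechanical once Lemma \ref{lem:graphcycleindex} is available.
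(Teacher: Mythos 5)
Your proposal is correct and is exactly the paper's argument: the theorem is stated there as following "immediately" from Lemma \ref{Lem:Polya} and Lemma \ref{lem:graphcycleindex}, i.e.\ by substituting $x_k=2$ into the cycle index, which is precisely the computation you carry out. All of your exponent arithmetic (in particular $\frac{p-1}{d}+\frac{3(p-1)}{2d}=\frac{5(p-1)}{2d}$ and $\frac{3(p-1)}{d}+\frac{p-1}{2d}=\frac{7(p-1)}{2d}$) checks out.
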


	In \cite{Mishna2000}, Mishna calculated the number of the circulant graphs of order $p$~($p$ prime) up to isomorphism. With the similar proof of \cite[Theorem 2.11]{Mishna2000}, we get the number of circulant graphs of order $2p$~($p$ prime).

	\begin{lemma}\label{lem:circulant2p}
		Let $p$ be an odd prime. Then the number of circulant graphs  of order $2p$ up to isomorphism is given by
		\begin{equation}\label{equ:circulant2p}
			\mathcal{N}_c=\frac{2}{p-1} \sum_{d \mid(p-1)} \varphi(d) 2^{\frac{p-1}{d}},
		\end{equation}
		where $\varphi(\cdot)$ is the Euler's totient function.
	\end{lemma}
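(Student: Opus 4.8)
The plan is to repeat, for the cyclic group $\mathbb{Z}_{2p}$, the P\'olya-counting scheme used above for $T_{8p}$. A circulant graph of order $2p$ is a Cayley graph $\Cay(\mathbb{Z}_{2p},S)$ with an inverse-closed $S\subseteq\mathbb{Z}_{2p}\setminus\{0\}$, and since $\mathbb{Z}_{2p}$ is cyclic of squarefree order it is a CI-group; hence, exactly as in Corollary \ref{cor:S=T}, $\Cay(\mathbb{Z}_{2p},S)\cong\Cay(\mathbb{Z}_{2p},T)$ if and only if $S$ and $T$ lie in one orbit of $\Aut(\mathbb{Z}_{2p})=\mathbb{Z}_{2p}^*$ acting by multiplication. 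By Lemma \ref{Lem:Polya}, $\mathcal{N}_c$ therefore equals the cycle index $\mathcal{I}(\mathbb{Z}_{2p}^*,D')$ evaluated at $x_1=x_2=\cdots=2$, where $D'$ is the collection of inverse-closed blocks $\{s,-s\}$.

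First I would pin down $D'$. The element $p$ is the unique nonzero self-inverse element, so it is a singleton block; moreover every $u\in\mathbb{Z}_{2p}^*$ is odd, so $up\equiv p\pmod{2p}$ and $\{p\}$ is fixed by the whole group, contributing one $1$-cycle to every permutation. The other $2p-2$ elements form $p-1$ genuine pairs $\{x,-x\}$. Writing $\mathbb{Z}_{2p}\cong\mathbb{Z}_2\times\mathbb{Z}_p$ and $\mathbb{Z}_{2p}^*\cong\mathbb{Z}_p^*$, these pairs fall into $\frac{p-1}{2}$ ``even'' blocks and $\frac{p-1}{2}$ ``odd'' blocks, and on each family a unit identified with $v\in\mathbb{Z}_p^*$ acts through the induced action of $v$ on $\mathbb{Z}_p^*/\{\pm1\}$, which is precisely the action underlying Mishna's order-$p$ count in \cite{Mishna2000}.

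The core computation is the cycle type of a single unit. By Lemma \ref{Lem:Z2Pcyclic} the group $\mathbb{Z}_{2p}^*$ is cyclic of order $p-1$, so for each $d\mid(p-1)$ there are exactly $\varphi(d)$ units of order $d$. For such a unit $v$ I would read off its cycles on $D'$: besides the fixed singleton, its behaviour on the proper pairs is governed by whether $-1\in\langle v\rangle$, that is, by the parity of $d$. When $d$ is odd, $x$ and $-x$ lie in distinct $\langle v\rangle$-orbits and one obtains $\frac{p-1}{d}$ cycles of length $d$; when $d$ is even, $-1=v^{d/2}$ folds each length-$d$ orbit onto itself, yielding $\frac{2(p-1)}{d}$ cycles of length $\frac{d}{2}$. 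Substituting $x_i=2$ replaces each unit by $2$ raised to its number of cycles, and summing $\varphi(d)$ over the divisors $d\mid(p-1)$, grouped by parity, produces a closed divisor sum with $\varphi(d)$-weighted powers of $2$.

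The step I expect to be the main obstacle is exactly this parity dichotomy on the proper pairs, which is where the compact formula \eqref{equ:circulant2p} must be read off. The identification $x\sim -x$ behaves differently according to whether $-1$ is a power of $v$, and for units of even order it simultaneously halves the cycle length and doubles the number of cycles relative to the bare multiplication action of $\mathbb{Z}_p^*$ on $\mathbb{Z}_p^*$; overlooking this is the natural place to undercount, so I would verify the even-order case especially carefully. Once the odd- and even-order cases are tabulated correctly, the remaining work is the routine collection of the divisor-sum terms, entirely parallel to the simplification carried out for $\mathcal{I}(\Aut(T_{8p}),D)$ in Lemma \ref{lem:graphcycleindex}.
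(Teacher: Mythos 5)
Your route is the natural one, and it is essentially the only route available: the paper gives no proof of this lemma beyond a pointer to Mishna's prime-order computation, so the P\'olya scheme on the block set $D'=\{\{p\}\}\cup\{\{x,-x\}\}$ with $\mathbb{Z}_{2p}^*$ acting by multiplication is exactly what is intended. Your description of $D'$ and, crucially, your cycle-type computation are correct: a unit $v$ of order $d$ fixes $\{p\}$, and on the $p-1$ proper pairs it has $\frac{p-1}{d}$ cycles of length $d$ when $d$ is odd, but $\frac{2(p-1)}{d}$ cycles of length $\frac{d}{2}$ when $d$ is even, because $-1=v^{d/2}$ then folds each multiplicative orbit onto itself.

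The gap is in the step you defer as ``routine collection of the divisor-sum terms'': those cycle types do \emph{not} assemble into \eqref{equ:circulant2p}. The right-hand side of \eqref{equ:circulant2p} equals $\frac{1}{p-1}\sum_{d\mid p-1}\varphi(d)2^{1+\frac{p-1}{d}}$, so it presumes that every unit of order $d$ has $1+\frac{p-1}{d}$ cycles on $D'$; by your own count that is true only for odd $d$, while for even $d$ the number of cycles is $1+\frac{2(p-1)}{d}$. Already $d=2$ exposes the mismatch: $v=-1$ fixes every block of $D'$ and hence has $p$ cycles, not $1+\frac{p-1}{2}$. Completing your argument honestly gives
\begin{equation*}
\mathcal{N}_c=\frac{2}{p-1}\left(\sum_{\substack{d\mid p-1\\ d~\text{odd}}}\varphi(d)\,2^{\frac{p-1}{d}}+\sum_{\substack{d\mid p-1\\ d~\text{even}}}\varphi(d)\,2^{\frac{2(p-1)}{d}}\right),
\end{equation*}
which for $p=3,5,7$ gives $8,20,48$, whereas \eqref{equ:circulant2p} gives $6,12,28$. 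The value $8$ for $p=3$ can be checked by hand: all eight inverse-closed subsets of $\mathbb{Z}_6\setminus\{0\}$ are unions of $\{1,5\},\{2,4\},\{3\}$, each block is fixed by $\mathbb{Z}_6^*=\{\pm1\}$, and $\mathbb{Z}_6$ is a CI-group, so there are eight pairwise non-isomorphic circulant graphs on six vertices. So the defect is not in your argument but in the statement: the lemma's formula undercounts whenever $p-1$ has an even divisor (i.e., always), your parity-split sum is the correct count, and the error propagates into Theorem \ref{thm:connectgraphs} and the $\mathcal{N}_c$ and $\mathcal{N}'$ columns of Table \ref{table:graph}.
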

	Recall that a Cayley graph $\text{Cay}(T_{8p},S)$ is connected if and only if $\langle S \rangle  = T_{8p}$. Thus, for $S \subseteq D = A \cup B = A_{1}\cup A_{2} \cup \{a^p\} \cup B $,  $\Cay(T_{8p},S)$ is disconnected if and only if
	$$  S \subseteq \langle a \rangle \cup \langle a \rangle b^2 \backslash \{e\} ~~ \text{or} ~~ S = \{a^jb,a^{p+j}b^3\}$$
	$$ \text{or} ~~S = \{a^jb,a^{p+j}b^3,a^p\} ~~ \text{or}~~
	S = \{a^jb,a^{p+j}b^3,b^2,a^pb^2\} ~~ \text{or}~~
	S = \{a^jb,a^{p+j}b^3,b^2,a^pb^2,a^p\}$$
	$$\text{or}~~S =  \{a^jb,a^{p+j}b^3,a^{j+p}b,a^jb^3 \} ~~\text{or}~~ S =  \{a^jb,a^{p+j}b^3,a^{j+p}b,a^jb^3,a^p \}$$
	$$\text{or}~~S =  \{a^jb,a^{p+j}b^3,a^{j+p}b,a^jb^3,b^2,a^pb^2 \} ~~\text{or}~~ S =  \{a^jb,a^{p+j}b^3,a^{j+p}b,a^jb^3,b^2,a^pb^2,a^p \}$$
	for $j\in Z_{2p}$. Note that
	$$ \Cay(T_{8p}, \{a^jb,a^{p+j}b^3 \}) \cong \Cay(T_{8p}, \{b,a^pb^3 \}),$$
	$$\Cay(T_{8p},\{a^jb,a^{p+j}b^3,a^p\}) \cong \Cay(T_{8p},\{b,a^pb^3,a^p\}),$$
	$$ \Cay(T_{8p},\{a^jb,a^{p+j}b^3,b^2,a^pb^2\}) \cong \Cay(T_{8p},\{b,a^{p}b^3,b^2,a^pb^2\}),$$
	$$ \Cay(T_{8p},\{a^jb,a^{p+j}b^3,b^2,a^pb^2,a^p\}) \cong \Cay(T_{8p},\{b,a^{p}b^3,b^2,a^pb^2,a^p\}),$$
	$$ \Cay(T_{8p},\{a^jb,a^{p+j}b^3,a^{j+p}b,a^jb^3\}) \cong
	\Cay(T_{8p},\{b,a^{p}b^3,a^{p}b,b^3\}),$$
	$$ \Cay(T_{8p},\{a^jb,a^{p+j}b^3,a^{j+p}b,a^jb^3,a^p\}) \cong
	\Cay(T_{8p},\{b,a^{p}b^3,a^{p}b,b^3,a^p\}),$$
	$$ \Cay(T_{8p}, \{a^jb,a^{p+j}b^3,a^{j+p}b,a^jb^3,b^2,a^pb^2\}) \cong
	\Cay(T_{8p}, \{b,a^{p}b^3,a^{p}b,b^3,b^2,a^pb^2\}),$$
	$$ \Cay(T_{8p}, \{a^jb,a^{p+j}b^3,a^{j+p}b,a^jb^3,b^2,a^pb^2,a^p\}) \cong
	\Cay(T_{8p}, \{b,a^{p}b^3,a^{p}b,b^3,b^2,a^pb^2,a^p\}),$$
	for each $j$, since
	$$\sigma_{1,j}(b) = a^jb,~\sigma_{1,j}(b^2) = b^2,~\sigma_{1,j}(b^3) = a^jb^3,~\sigma_{1,j}(a^p) = a^p,$$
	$$\sigma_{1,j}(a^pb) = a^{p+j}b,~\sigma_{1,j}(a^pb^2) = a^pb^2~\text{and}~\sigma_{1,j}(a^pb^3) = a^{p+j}b^3.$$
	

	\begin{theorem}\label{thm:connectgraphs}
		Let $p$ be an odd prime. Then the number of connected Cayley graphs over $T_{8p}$ up to isomorphism is equal to
		$$ \mathcal{N}' = \mathcal{N} - \mathcal{N}_{c}^2-8,$$
		where $\mathcal{N}$ and $\mathcal{N}_{c}$ are shown in  \eqref{equ:number} and \eqref{equ:circulant2p}.
	\end{theorem}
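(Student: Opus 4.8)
The plan is to count the disconnected Cayley graphs over $T_{8p}$ up to isomorphism and subtract that number from $\mathcal N$. By Corollary \ref{cor:S=T}, two Cayley graphs over $T_{8p}$ are isomorphic exactly when their connection sets are $\Aut(T_{8p})$-equivalent, so the number of disconnected graphs up to isomorphism equals the number of $\Aut(T_{8p})$-orbits of disconnected connection sets $S$. I would invoke the connectivity criterion $\langle S\rangle=T_{8p}$ together with the classification of disconnected $S$ established just before the statement: either (type I) $S\subseteq(\langle a\rangle\cup\langle a\rangle b^2)\setminus\{e\}$, or (type II) $S$ is one of the eight displayed families indexed by $j\in\mathbb Z_{2p}$. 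Since Lemma \ref{lem:graphcycletype} shows every $\sigma_{\alpha,\beta}$ and $\tau_{\gamma,\delta}$ preserves the decomposition $D=A\cup B$, where $A$ indexes the orbits inside $\langle a\rangle\cup\langle a\rangle b^2$ and $B$ indexes the remaining orbits, no type-I set can be $\Aut(T_{8p})$-equivalent to a type-II set. Hence the two families are unions of distinct orbits and may be counted separately and added.

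For the type-I contribution I would write $S=S_1\sqcup S_2$ with $S_1\subseteq\langle a\rangle$ and $S_2\subseteq\langle a\rangle b^2$, so that the $S_1$-data ranges over $R^{A_1\cup\{a^p\}}$ and the $S_2$-data over $R^{A_2}$. Under $a^i\leftrightarrow i$, an inverse-closed $S_1$ is precisely the connection set of a circulant graph of order $2p$, and under $a^l b^2\leftrightarrow l$ the set $S_2$ is another such datum (with inversion $l\mapsto p-l$). Using the explicit action in Lemma \ref{lem:graphcycletype}, namely $\sigma_{\alpha,\beta}(\bar a^i)=\bar a^{\alpha i}$, $\sigma_{\alpha,\beta}(\bar a^l b^2)=\bar a^{\alpha l}b^2$, $\tau_{\gamma,\delta}(\bar a^l b^2)=\bar a^{\gamma l+p}b^2$, with $a^p$ fixed throughout, I would argue that the induced action on the pair $(S_1,S_2)$ realizes the circulant-isomorphism action of order $2p$ on each coordinate independently, the $\tau$-type automorphisms supplying the shift on $A_2$ that decouples the second coordinate from the first. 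By the product rule for orbit counting under such a decoupled action, and by Lemma \ref{lem:circulant2p}, the type-I sets fall into $\mathcal N_c\cdot\mathcal N_c=\mathcal N_c^2$ classes.

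For the type-II contribution, the isomorphisms displayed before the statement, all induced by $\sigma_{1,j}$, collapse each of the eight $j$-indexed families to its representative at $j=0$, giving at most eight classes. To see there are exactly eight, I would verify that these representatives are pairwise non-isomorphic: each lies in a single Sylow $2$-subgroup $\langle b\rangle\cong\mathbb Z_8$, so the graph is a disjoint union of $p$ copies of a circulant of $\mathbb Z_8$, and the eight connection sets yield eight distinct $\mathbb Z_8$-circulants separated first by valency and, within equal valency, by invariants such as bipartiteness or the component structure of the graph or of its complement. Combining the two counts gives $\mathcal N_c^2+8$ disconnected classes, whence $\mathcal N'=\mathcal N-\mathcal N_c^2-8$.

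The main obstacle is the independence claim in the type-I count: one must show that, as $\sigma_{\alpha,\beta}$ and $\tau_{\gamma,\delta}$ range over $\Aut(T_{8p})$, the joint action on $(S_1,S_2)$ splits as a product of two order-$2p$ circulant actions, so that the orbit count is exactly the square $\mathcal N_c^2$; this needs careful bookkeeping of how the shared multiplier (the common value of $\alpha$ and $\gamma$) and the extra shift $+p$ interact on $A_1\cup\{a^p\}$ and on $A_2$. A lesser but still necessary step is the pairwise non-isomorphism of the eight type-II representatives.
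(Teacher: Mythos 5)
Your proposal follows essentially the same route as the paper: both count the disconnected graphs as $\mathcal{N}_c^2$ classes with $S\subseteq\langle a\rangle\cup\langle a\rangle b^2\setminus\{e\}$ (via the product of two order-$2p$ circulant counts, including the empty set) plus the eight classes meeting $B$, and subtract from $\mathcal{N}$. The two points you flag as needing care --- the decoupling of the action on $A_1\cup\{a^p\}$ and $A_2$, and the pairwise non-isomorphism of the eight representatives --- are in fact asserted without further justification in the paper's own proof, so your treatment is, if anything, slightly more scrupulous on the same argument.
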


\begin{proof}
Recall that $\Cay(T_{8p},S)$ is connected if and only if $\langle S \rangle = T_{8p}$. Thus, if  $S \subseteq A_{1}\cup\{a^p\}\cup A_{2}$, then $\Cay(T_{8p},S)$ is disconnected. So, if $\emptyset \neq S \subseteq A_{1} \cup \{a^p\} = \langle a \rangle $,  by Lemma \ref{lem:circulant2p}, the number of disconnected Cayley graphs $\Cay(T_{8p},S)$ up to isomorphism is equal to $\mathcal{N}_{c}-1$. Similarly, if $\emptyset \neq S \subseteq A_{2} = \langle a \rangle b^2$, then the number of disconnected Cayley graphs $\Cay(T_{8p},S)$ up to isomorphism is also equal to $\mathcal{N}_{c}-1$. Thus, if $S \subseteq A_{1}\cup\{a^p\}\cup A_{2}$, then the number of disconnected Cayley graphs $\Cay(T_{8p},S)$ up to isomorphism is equal to
$$
1+ (\mathcal{N}_{c}-1) + (\mathcal{N}_{c}-1) + (\mathcal{N}_{c}-1)^2 =\mathcal{N}_{c}^2.
$$
Note also that there are $8$ more disconnected Cayley graphs $\Cay(T_{8p}, S)$ up to isomorphism listed above, where $S \cap B \neq \emptyset$. Hence, by Theorem \ref{thm:graphnumber}, the number of connected Cayley graphs over $T_{8p}$ up to isomorphism is equal to
$$
\mathcal{N}'= \mathcal{N} -  \mathcal{N}_{c}^2 - 8.
$$
This completes the proof.
		\qed\end{proof}

	\begin{table}[ht]
		\caption{\textbf{The Number of (Connected) Cayley Graphs over $T_{8p}$~($3 \leq  p \leq 13$)}}
		\label{table:graph}
		\centering
		\begin{tabular}{cccc}
			\hline
			$p$ & $\mathcal{N}$ & $\mathcal{N}_{c}$ & $\mathcal{N}'$   \\
			\hline
			3 & 432 & 6 & 388   \\
			\hline
			5 & 18144 & 12 & 17992\\
			\hline
			7 & 1824384 & 28 & 1823592 \\
			\hline
			11 & 41253667584 & 216 & 41253620920 \\
			\hline
			13 & 7330997009984 & 704 & 7330996514360 \\
			\hline
		\end{tabular}
	\end{table}
	
Table \ref{table:graph} lists the number of (connected) Cayley graphs over $T_{8p}$ ($p$ prime) up to isomorphism for $3 \leq p \leq 13$ by applying Theorems \ref{thm:graphnumber} and \ref{thm:connectgraphs}, where $\mathcal{N}$ and $\mathcal{N}'$ denote the number of Cayley graphs and connected Cayley graphs over $T_{8p}$ respectively,  and $\mathcal{N}_{c}$ is defined in Lemma  \ref{lem:circulant2p}.	
	
	\section{Conclusion}
	In  this paper, we enumerate the number of Cayley graphs over $T_{8p}$ ($p$ is odd prime) up to isomorphism by using the P{\'o}lya Enumeration Theorem. We obtain a formula for enumerating connected Cayley graphs over $T_{8p}$. We also list the explicit number of (connected) Cayley graphs over $T_{8p}$ for $3\leq p\leq 13$. It is known that CI-groups play an important role in enumerating Cayley graphs. Therefore, we conclude the paper by proposing the following problem: \textbf{Characterize other kinds of nonabelian CI-groups and enumerate the number of Cayley graphs over them.}


\begin{thebibliography}{99}
		\setlength{\parskip}{2pt}
		
		\small{
			\bibitem{Adam1967}
			 A. \'{A}d\'{a}m, Research problem 2--10, J. Combinatorial Theory 2 (3) (1967) 393.
			
			\bibitem{Alspach2002}
			B. Alspach, M. Mishna, Enumeration of Cayley graphs and digraphs, Discrete Math. 256
			(2002) 527--539.
			
			\bibitem{Babai1977}
			L. Babai, Isomorphism problem for a class of point-symmetric structures, Acta Math. Acad. Sci. Hungar. 29 (1977) 329--336.
			
			
			\bibitem{Dobson1995}
			E. Dobson, Isomorphism problem for Cayley graph of $\mathbb{Z}_{p}^{3}$, Discrete Math. 147 (1995) 87--94.
			
			\bibitem{Dobson2015}
			E. Dobson, J. Morris, P. Spiga, Further restrictions on the structure of finite DCI-groups: An addendum, J. Algebraic Combin. 42 (2015) 959--969.
			
			\bibitem{Elspas1970}
			B. Elspas, J. Turner, Graphs with circulant adjacency matrices, J. Combinatorial Theory 9 (1970) 297--307.
			
			\bibitem{HararyP1973}
			F. Harary, E. M. Palmer, Graphical Enumeration, Academic Press, New York, 1973.
			
			\bibitem{Huang2003}
			Q. Huang, A classification of circulant DCI (CI)-digraphs of 2-power order, Discrete Math. 265 (2003) 71--84.
			
			\bibitem{HuangXY2019}
			X. Huang, Q. Huang, Enumerating Cayley (di)-graphs on dihedral groups, J. Algebra Appl. 18 (4)
			(2019) 1950075.
			
			\bibitem{HuangXY2017}
			X. Huang, Q. Huang, L. Lu, Enumeration of cubic Cayley graphs on dihedral groups, Acta Math.
			Sin. (Engl. Ser.) 33 (7) (2017) 996--1010.
			
			\bibitem{Huang2000}
			Q. Huang, J. Meng, A classification of DCI (CI)-subsets for cyclic group of odd prime power order, J. Combin. Theory Ser. B 78 (2000) 24--34.
			
			
			\bibitem{CI-group2007}
			C. Li, Z. Lu, P. P\'alfy, Further restrictions on the structure of finite CI-groups, J. Algebraic Combin. 26 (2007) 161--181.
			
			\bibitem{Liskovets2000}
			V. Liskovets, R. P\"oschel, Counting circulant graphs of prime-power order by decomposing
			into orbit enumeration problems, Discrete Math. 214 (1--3) (2000) 173--191.
			
			\bibitem{Mishna2000}
			M. Mishna, Cayley graph enumeration, M.Sc. Thesis, Simon Fraser University, 2000.
			
			\bibitem{Muzychuk1995}
			M. Muzychuk, \'Ad\'am's conjecture is true in the square-free case, J. Combin. Theory
			Ser. A 72 (1995) 118--134.
			
			\bibitem{Muzychuk1997}
			M. Muzychuk, On \'Ad\'am's conjecture for circulant graphs, Discrete Math. 167 (1997) 497--510.



			
			\bibitem{Shanks1993}
			D. Shanks, Solved and unsolved problems in number theory, Chelsea, New York, 1993.
			
			%
			\bibitem{Turner1967}
			J. Turner, Point-symmetric graphs with a prime number of points, J. Combinatorial Theory 3 (1967) 136--145.



\bibitem{WangLIU2021}
Y. Wang, W. Liu, L. Feng, The number of connected Cayley graphs over dicyclic group, Math. Theory Appl. 41 (2) (2021) 64--75.
			
			\bibitem{WJing2024}
			J. Wang, L. Wang, X. Liu, Enumeration of dicirculant digraphs, arXiv:2409.04695,2024.
		


}
	\end{thebibliography}
\end{document}